\newcommand{\X}{\mathcal X} 
\newcommand{\E}{\mathds{E}}
\newcommand{\un}{\mathds{1}}
\newcommand{\N}{\ensuremath{\mathbb{N}}}
\newtheorem{remark}{Remark}
\newtheorem{lemma}{Lemma}
\newtheorem{defi}{Definition}[section]
\newtheorem{theo}{Theorem}
\newtheorem{prop}{Proposition}
\begin{document}



\title{Potential well in  Poincar\'e recurrence}
\author{Miguel Abadi, Vitor Amorim, Sandro Gallo}


\maketitle

\abstract{From a physical/dynamical system perspective, the potential well  represents the proportional mass of points that escape the neighbourhood
of a given point. In the last 20 years, several works have  shown the importance of this quantity to obtain precise approximations for several recurrence time  distributions in mixing stochastic processes and  dynamical systems. Besides providing a review of the different scaling factors used in the literature in recurrence times,  the present work contribute with two new results: 
(1) for $\phi$-mixing and $\psi$-mixing  processes, we  give a new exponential approximation  for hitting and return times using the  potential well as scaling parameter. The error terms are explicit and sharp.
(2) We analyse the uniform positivity of the potential well. }

\tableofcontents

\section{Introduction}
The close relation between the Extreme Value Theory (EVT) and the statistical properties of Poincar\'e recurrence has been recently quite well explored. The starting point is that the exceedances of a stochastic process to  a sequence  of barrier  values $a_n>0, n\in\mathbb N,$  can be considered as hittings to a sequence of nested sets. More precisely, if one defines the semi-infinite intervals
\[
A_n=(a_n,\infty)  ,
\]
and consider a sequence of random variables $X_1,X_2,\ldots$, one has the equivalence
\[
\max\{X_1,\dots,X_t\} > a_n \  \ \ \  \text{if and only if } \  \ \ \  T_{A_n} \le t ,
\] 
where for any measurable set $A$, $T_A$ denotes the smallest $k$ such that $X_k\in A$. As the sequence of levels $a_n$ diverges, the sets $A_n$ are nested. This equivalence allows to make a bridge between two historically independent theories: Extreme Value Theory (EVT) and Poincar\'e Recurrence Theory (PRT). 
While EVT focuses on the existence (and identification) of the limit of the distribution of the partial maxima,  $k$-maxima, among others (\cite{leadbetter2012extremes,freitas2010hitting,freitas2013extremal, resnick2013extreme,lucarini2016extremes}), the aim of recent works on PRT is to understand the statistical properties of the different notions of return times. 

\vspace{0.2cm}

The present paper stands in the approach of PRT, our  interest is about the statistics of visits of  a random process $X_t,t\in \mathbb N$ to a given target measurable set.
\emph{Asymptotic} statistics are obtained studying  sequences of target sets $A_n,n\ge1$, usually of measure shrinking to  zero. In this context and  for certain classes of processes, hitting and return times with respect to a given sequence of target sets converge to the exponential distribution, modelling the unpredictability of rare events. However, this rough affirmation is full of nuances which need to be established in very precise terms. It turns out that these details bring many information on the system. 

For instance, for two observables having the same probability, the Ergodic Theorem says that, macroscopically, their  number of occurrences are about the same. However, these occurrences can appear scattered in a very different way along time. Under some strong mixing assumptions, it is a well-known fact of the literature that for nested sequences of observables with the same probability, the asymptotic observation of one of them can be distributed as a Poisson process while the other one will follow a compound Poisson process. Thus, the dichotomy Poisson/compound Poisson in the \emph{same} system is determined also by the intrinsic properties of the target sets considered. 

In the setting of the present paper, the target sets are finite strings of symbols (\emph{patterns}). In this case, even if the process is a sequence of independent random variables, the successive occurrences of the string are not independent because the structure of the pattern itself enters the game, allowing or avoiding consecutive observations due to possible overlaps with itself. This leads to a dichotomy between aperiodic/periodic patterns which yields, in the limit of long patterns, to the dichotomy Poisson/compound Poisson mentioned before. In passing, let us also mention that this dichotomy also exists in EVT where it is referred as phenomenon of clustering/non clustering of maxima, and has generated a great deal of research along the 2 last decades \cite{lucarini2016extremes}.
 
 \vspace{0.2cm}
 
 Let us now be more specific about what we are doing here. First, we stand in the context of discrete time stochastic processes with countable alphabet enjoying $\phi$-mixing.  Fix any point $x$, that is, any right infinite sequence of symbols taken from the alphabet, and  consider the nested sequence of neighbourhoods corresponding to the first $n$ symbols of $x$, namely $A_n=(x_0,\ldots,x_{n-1}), n\ge1$. The main theorem of the paper, Theorem \ref{theo:correct}, gives explicit and computable error terms for the approximation of the hitting time distribution $\mu(T_{A_n}>t)$ and return time distribution $\mu_{A_n}(T_{A_n}>t)$, by  exponential distributions whose parameter is explicit and depends on $A_n$.

The first main advantage of Theorem \ref{theo:correct} is that it uses the \emph{potential well} as scaling parameter. In words, the potential well is the  probability, conditioned on starting from $A_n$, that the pattern $A_n$ does not reappear at the first possible moment it could reappear. The use of this simple and well defined quantity as scaling parameter  contrasts with previous works using  parameters whose expressions are hardly explicit and even more hardly computable. 
Another advantage of Theorem \ref{theo:correct} is that, unlike a whole body of literature obtaining almost sure results, our results hold for \emph{all $x$}. This allows to distinguish different limiting distributions, as for example in the periodic/aperiodic dichotomy described above that almost-sure results cannot detect.
And last but not least, the error terms of  our approximations are not in total variation distance, but in the stronger point-wise form with respect to the time scale. 

Yet another important point of Theorem \ref{theo:correct}, with respect to return times, is that it corrects the exponential approximation  obtained by \cite{abadi/vergne/2009}. Indeed, Theorem 4.1 therein contains a mistake in the error term for small $t$'s.

 \vspace{0.2cm}

The other main novelty of the present work is Theorem \ref{theo:positive}, stating that the potential well is uniformly bounded away from $0$ when we have $\psi$-mixing or $\phi$-mixing with summable function $\phi(n)$. Naturally, as a conditional probability, we know that the potential well belongs to the interval $[0,1]$ for any $n\ge1$ and any pattern $A_n$. But it was proved that the potential well could be arbitrarily close to $0$ for $\beta$-mixing processes, a slightly weaker mixing assumption than $\phi$-mixing. Indeed, it was shown  in \cite{abadi/cardeno/gallo/2015} that for the binary renewal process, with  specific choices of transition probabilities and target sets $A_n,n\ge1$,  the potential well of $A_n$ vanishes as $n$ diverges. Note that the border is thin between this $\beta$-mixing example  and our Theorem \ref{theo:positive} holding for $\psi$-mixing and $\phi$-mixing with summable $\phi$ (see the review of \cite{bradleysurvey} on the distinct mixing assumptions). We conjecture that the assumption of summability of the $\phi$ rates can be dropped. 

 \vspace{0.2cm}

To conclude on the importance of the present work as a whole, let us mention that our  results  are fundamental for the study  of further recurrence quantities, such as the return time function \cite{wyner1989some,ornstein/weiss/1993} and the waiting time function \cite{shields1993waiting, marton1994almost}, establishing a link with information theory. These random variables are known to satisfy a counterpart of the famous Shannon-McMillan-Breiman Theorem (asymptotic equipartition property). In order to study the fluctuations of these limit theorems, for instance a large deviation principle, we need to control the return/hitting time exponential approximations for any point and any $t>0$. It is particularly clear in \cite{chazottes/ugalde/2005} and \cite{abadi2019complete} studying the fluctuations of the waiting time and return time respectively. It is also interesting to notice that it was \cite{collet/galves/schmitt/1999} who first pointed the importance of seeking exponential approximations for \emph{any} point $x$, and it was precisely to  study the small and large fluctuations of the return time function. 

%
%

 \vspace{0.2cm}

The paper is organized as follows. We describe in Section \ref{sec:setting} the setting of the paper in the context of PRT, defining carefully the types of exponential approximations we are interested in and explaining, including through an extensive bibliography, the role of the potential well as scaling parameter. Section \ref{sec:results} contains the main results and Section \ref{sec:proofs} is dedicated to their proofs. 

\section{Poincaré Recurrence Theory for mixing processes}\label{sec:setting}

\subsection{The framework of mixing processes}


Consider a countable set $\mathcal{A}$ that we call  alphabet.
With $\N$ we denote the set of nonnegative integers and with  $\mathcal X:=\mathcal{A}^{\mathbb{N}}$ the set of right infinite sequences  $x=(x_0,x_1,\ldots)$ of symbols taken from $\mathcal{A}$.
 Given a point $x\in \mathcal{X}$, and for any finite set $I\subset\N$, the cylinder sets with base in $I$
is defined as  the set $A_I(x):=\{y\in \mathcal X :  y_i=x_i,i\in I\}$. In the particular case where $I=\{0\ldots,n-1\}$ we will write  $A_n(x)$ and sometimes abuse notation writing $x_0^{n-1}$. We endow $\mathcal X$ with the  $\sigma$-algebra $\mathcal F$  generated by the class of cylinder sets 
$\{A_I:I\subset\N,|I|<\infty\}$. 
Further  $\mathcal F_I$ denotes  the $\sigma$-algebra generated by $A_I(x), x\in \mathcal{X}$. 
The special case in which $I=\{i,\ldots,j\}$, $0\le i\le j\le \infty$, we use the notation $\mathcal F_i^j$. We  use the shorthand notation $a_i^j:=(a_i,a_{i+1},\ldots,a_j)$, $0\le i\le j<\infty$ for finite strings of consecutive symbols of $\mathcal{A}$. 
When necessary,  $A_n(x)$ will be naturally identify with the sequence $x_0^{n-1}$.

The shift operator $\sigma:\mathcal X\rightarrow \mathcal X$ shifts the point $x=(x_0,x_1,x_2,\dots)$  to the left by one coordinate, $(\sigma x)_i=x_{i+1}$, $i\ge0$.

We consider a shift invariant (or stationary) probability measure $\mu$   on $(\mathcal X,\mathcal F)$. 
For any  $A\in\mathcal F$ of positive measure,    $\mu_{A}(\cdot):=\frac{\mu(\{x\in \cdot\cap A\})}{\mu(A)}$ is the conditional measure $\mu$ restricted to $A$. 


Our results are stated under two mixing conditions that we now define.
For all $n\geq 1$, define
\begin{align*}
\phi(n)&:=\sup_{i\in\mathbb N, A\in\mathcal F_0^i,B\in\mathcal F_{i+n}^\infty}\left|\frac{\mu(A\cap B)}{\mu(A)}-\mu(B)\right| ,\\
\psi(n)&:=\sup_{i\in\mathbb N, A\in\mathcal F_0^i,B\in\mathcal F_{i+n}^\infty}\left|\frac{\mu(A\cap B)}{\mu(A)\mu(B)}-1\right| .
\end{align*}

Note that $\psi(n)$ and $\phi(n)$ are nonincreasing sequences, since $\mathcal{F}_0^{i}\subset\mathcal{F}_0^{i+1}$ for every $i\geq 0$.

\begin{defi}\label{def:mixing}
We say that the measure $\mu$ on $(\mathcal X,\mathcal F)$ is 
$\phi$-mixing (\emph{resp.} $\psi$-mixing) if 
$\phi(n)$ (\emph{resp.}  $\psi(n)$) goes to $0$ as $n$ diverges. We will say that $\mu$ is ``summable $\phi$-mixing'' if it is $\phi$ mixing with $\sum_n\phi(n)<\infty$.
\end{defi}

We refer to \cite{bradleysurvey} for an exhaustive review of mixing properties and examples. 

\subsection{Recurrence times and exponential approximations}\label{sec:not}

The hitting time of a point $y$ to a  set $A\in\mathcal F$ is defined by
\begin{align*}
T_{A}(y)&=\inf\{k\ge1:\sigma^k(y)\in A\}.
\end{align*}

For sets $A$ of small measure (rare events), and under mixing conditions such as the ones introduced in the preceding subsection, it is expected that $\mu(T_A>t)$ is approximately exponentially distributed. This is what we call hitting time exponential approximation. Similarly, when we refer to return time, we mean that  we study the approximation of  $\mu_A(T_A>t)$, that is, the measure of the same event, conditioned on the points starting in $A$. 

In this paper we are interested in the case where we fix \emph{any} point $x$ and consider  $A_n(x)$ as target set. When $n$ diverges, the measure of $A_n(x)$ vanishes, leading to rare events. The scaling parameter of the exponential approximation will depend on the point $x$. 
 
The two main types of approximations that appeared in the literature when approximating the hitting/return time distributions around any point 
$x$ of the phase space are a total variation distance type and a pointwise type.

\begin{itemize}
\item \emph{{\bf Type 1}:  Total variation distance.} For any $x\in \mathcal{X}$, 
\begin{itemize}
\item Hitting times
\[
\sup_{t>0}\left|\mu(T_{A}>t)-e^{-\mu(A)\theta(A)t}\right|\le\epsilon(A), 
\] 
\item Return times
\[
\sup_{t>0}\left|\mu_{A}(T_{A}>t)-\bar\theta(A)e^{-\mu(A)\theta(A)t}\right|\le\epsilon(A).
\] 
\end{itemize}

\item \emph{{\bf Type 2}:  Pointwise.} For any $x \in \X$ and any $t>0$, 
\begin{itemize}
\item Hitting times
\[
\left|\mu(T_{A}>t)-e^{-\mu(A)\theta(A)t}\right|\le\epsilon(A,t) ,
\] 
\item Return times
\[
\left|\mu_{A}(T_{A}>t)-\bar\theta(A)e^{-\mu(A)\theta(A)t}\right|\le\epsilon(A,t) .
\] 

\end{itemize}
\end{itemize}
Note that in the return time approximation, the parameters $\theta$ and $\bar\theta$ need not to be equal. However, such approximation leads to 
\[
\mathbb{E}_{A}(T_A)\approx \bar\theta(A)\frac{1}{\mu(A)\theta(A)}.
\]
In view of Kac Lemma which, we recall, states that $\mathbb{E}_{A}(T_A)=\frac{1}{\mu(A)}$, the last display suggests that $\theta$ and $\bar\theta$ must be close. 

\subsection{Potential well: definition and genealogy in PRT}\label{secgenealogy}

As already explained, \emph{potential well} will be used as scaling parameter in the exponential approximations of Types 1 and 2 defined above. 
In order to define it, we need first to define the \emph{shortest possible return} of  a  set $A\in \mathcal F$ (to itself)
\begin{equation*}\label{eq:tau_smallest}
\tau(A):= \inf_{y\in A} \left\lbrace T_A(y):\mu_A\left(\sigma^{-T_A(y)}(A)\right)>0\right\rbrace,
\end{equation*}
or, equivalently
\[
\tau(A):=\inf\left\lbrace k\geq 1: \mu_A\left(\sigma^{-k}(A)\right)>0\right\rbrace.
\]

In the case where $A=A_n(x)$, we can define $\tau_n(x)=\tau(A_n(x))$, and the $\tau_n:\mathcal X\rightarrow \mathbb N$ constitutes a sequence of simple functions.

%
%
%

The first possible return time $\tau_n(x)$ is an object of independent interest which was studied under several perspectives in the literature. Let us mention that its asymptotic concentration was proved by \cite{saussol/troubetzkoy/vaienti/2002} and \cite{afraimovich/chazottes/saussol/2003}, large deviations in \cite{abadi/vaienti/2008}, \cite{haydn/vaienti/2010} and \cite{abadi/cardeno/2015}, and fluctuations in \cite{abadi/lambert/2013} and \cite{abadi2017shortest}.


Obviously, by definition, $\mu_A(T_A \ge\tau(A))=1$. If for a point $x \in A$ we have $T_A(x)>\tau(A)$, we say that $x$ \emph{escapes} from $A$.
The \emph{potential well} of order $n$ at $x$ is precisely the proportional measure of points of $A$ which escape from $A$
\begin{equation*}\label{eq:pot_well}
\rho(A):=\mu_{A}(T_{A} >\tau(A))  .
\end{equation*}
Since we are interested in the case where $A=A_n(x)$, we may use the alternative notation $\rho(x_0^{n-1})$.
Besides being explicitly computable in many situations, the potential well is physically meaningful and, as scaling parameter, provides precise exponential approximations for recurrence times under suitable mixing assumptions. 

\vspace{0.2cm}

We give below a small genealogy of scaling parameters that appeared in the literature that consider results holding \emph{for all} points to get approximations for hitting/return times. 
\begin{itemize}
\item As far as we know the first paper to prove exponential approximations for hitting time statistics \emph{for all points} is due to Aldous and Brown \cite{aldous1993inequalities}. They obtained Type 1 approximations in the case of  reversible Markov chains. The parameter used there is just  the inverse of the expectation, which is mandatory to use when the approximating law is the exponential distribution. 
However, this does not bring information about the value of the expectation.

\item Galves and Schmitt \cite{galves/schmitt/1997} obtained  Type 1 approximations for hitting times in $\psi$-mixing processes. The major breakthrough there was that the authors provided an explicit formula for the parameter (denoted by $\lambda(A)$). This quantity could be viewed as the \emph{grandfather} of $\rho$. Nonetheless, its explicit significance was not evident.

\item
\cite{abadi2001exponential} and \cite{abadi/2004}  gave  exponential approximations (Type 1 and Type 2 respectively) of the distribution of hitting time around {any} point using a scaling parameter. In \cite{abadi/2004} however, only its existence and necessity were proven, the calculation of $\lambda$ being intractable in general.  The main problem is that $\lambda(A)$ depends on the recurrence property of the cylinder $A$ up to  large time scales  (usually of the order of $\mu(A)^{-1}$). 

\item In order to circumvent this issue, \cite{abadi2001exponential} also provided,   in the context of approximations of Type 1, another scaling parameter, easier to compute, but with a slightly larger error term as a price to pay. 
It is defined as follows
\[
\zeta_{s}(x_{0}^{n-1}):=\mu_{x_0^{n-1}}(T_{x_0^{n-1}}>n/s).
\]
This quantity  depends on, at most, the $2n$ first coordinates of the process. $\zeta_{s}(x_{0}^{n-1})$ can be seen as the \emph{father} of the potential well. Both works \cite{abadi2001exponential} and \cite{abadi/2004} lead with processes enjoying $\psi$-mixing or summable $\phi$-mixing. 

\item The use of the potential well $\rho$ as scaling parameter was firstly proposed by Abadi in  \cite{abadi2006hitting}, still in the context of an approximation of Type 1 for hitting and return times. More specifically, it is proved 
that, for exponentially  $\alpha$-mixing processes, $\lambda$ and $\zeta$ (grandpa and father of $\rho$) can be well approximated by $\rho$.

\item The first paper to really directly use $\rho$ as scaling parameter was \cite{abadi/vergne/2009}, in which a type 2 approximation for return times was obtained, with $\bar\theta=\theta=\rho$. The process is assumed to be $\phi$-mixing.

\item Focusing on proving exponential approximations for hitting and return times under the largest possible class of systems, and still for all points, 
abadi and Saussol \cite{abadi2011hitting} returned to the approach of Galves and Schmitt. Their results hold under the $\alpha$-mixing condition, which is the weakest hypothesis  used up to date, but the scaling parameter is not explicit. 

\item Focussing on the specific class of binary renewal processes,  \cite{abadi/cardeno/gallo/2015}  proved  a type 1 approximation for hitting and return times using the potential well $\rho$. One interesting aspect  concerning this work lays in the fact that the renewal process is $\beta$-mixing (weaker than the $\phi$-mixing assumed by \cite{abadi/vergne/2009}). Moreover, the authors managed to use the renewal property to compute the limit of $\rho(A_n(x))$ for \emph{any} point $x$. In other words, the approximating asymptotic law for hitting and return times was  explicitly computed  as function of the parameters of the process. This result shows the usefulness of the potential well, an ``easy to compute'' scaling parameter.

\end{itemize}

%
%
%
%
%
%
%

\section{Main  results}\label{sec:results}

Theorem \ref{theo:correct} below presents Type 2 approximations for hitting and return time under $\phi$ and $\psi$-mixing conditions with the potential well as scaling parameter and an explicit error term. 

Before we can state this result, we first need to define the second order periodicity of string $A_n(x)$, which plays a crucial role for the size of the error term. 

\subsection{Second order periodicity}

The short returns that we will  define here are precisely those that are difficult to treat as (almost) independent. They not only depend on the correlation decay of the system but also on the particular properties of the string itself. Technically, for an $n$-cylinder, \emph{short} means returning in up to the order $n$ steps.

Consider the cylinder $A$ and suppose   $\tau(A)=k$. Write $n=qk+r$, where $q\in\mathbb{N}$ and $0\leq r<k$, 
and note that the cylinder overlaps itself in all multiples of $k$ smaller than $n$. The set $\mathcal P(A):=\{mk:1\leq m\leq q\}$ are  indexes of possible returns at multiples of $\tau(A)$, but returns can also occur at other time indexes after that. Let
$$
\mathcal{R}(A)=\{j\in\{qk+1,...,qk+r-1\}:\,    \mu_A(\sigma^{-j}(A)) >0 \}.
$$

A point $y\in A$ could only return to $A$ before $n$ at time indexes in $\mathcal{P}(A) \cup \mathcal{R}(A)$, but there is  a crucial difference between them. A point that escapes from $A$ can not return in $\mathcal{P}(A)$, but it \emph{could} return in $\mathcal{R}(A)$.
Namely, 
\[
\mu_A( \sigma^{-\tau(A)}(A^c) \cap   \sigma^{-j}(A) )   =0, \ \  j\in \mathcal{P}(A),
\]
while
\[
\mu_A( \sigma^{-\tau(A)}(A^c) \cap   \sigma^{-j}(A) ) >0, \ \  j\in \mathcal{R}(A).
\]

We set $n_A$ as the first possible return to $A$, \emph{among those points   $x \in A$ who escape $A$ at $\tau(A)$ } 
$$n_A =
\left \{
\begin{array}{lc}
\min\mathcal{R}(A)&\mathcal{R}(A)\neq\emptyset \\
\min\{j  :   \mu_A( \sigma^{-\tau(A)}(A^c) \cap   \sigma^{-j}(A) ) >0  \} ,                      & \mathcal{R}(A)=\emptyset. \\
\end{array}
\right.
$$
Actually, in the second case,  $n_A\ge n$. 
We refer to \cite{abadi/vergne/2009} to find an example that illustrates these facts.
This definition is slightly more general that the one therein, since we include the case of a non complete grammar\footnote{We say $\mu$ has complete grammar if $\mu(A_n(x))>0$ for any $x\in\mathcal X$ and $n\ge1$.}.

\subsection{Type 2 approximations scaled by the potential well}\label{sec:correct}

For any finite string $A$, let us denote with  $A^{(k)}$  the suffix of $A$ of size $k$. That is, if $A=x_0^{n-1}$, then $A^{(k)}=x_{n-k}^{n-1}$. When $A$ is an $n$-cylinder we will use the convention 
$\mu( A^{(j)}) = \mu( A^{(n)})=\mu(A)$ for $j\geq n$.

By definition,  $\phi(g)$ is finite for all $g\ge 1$. This is not the case of $\psi(g)$.
Thus, for $\psi$-mixing measures, we define 
\begin{equation}\label{def:g0}
g_0=g_0(\psi):=\inf\{g\geq 1:\psi(g)<\infty\}-1.
\end{equation}

Now, for the error term, define
\begin{align}
& \label{epspsi}(a)\;\;  \epsilon_{\psi}(A):=n\mu\left(A^{(n_A-g_0)}\right)+\psi(n),\\
& \label{epsphi}(b) \;\;\epsilon_{\phi}(A):=\inf_{1\leq w\leq n_A}\left\lbrace (n+\tau(A))\mu\left(A^{(w)}\right)+\phi(n_A-w)\right\rbrace . 
\end{align}
Note that cylinders $A$ of size $n$ verify that $n_A\geq n/2$, then $\epsilon_{\psi}$ is well defined for all $n> 2g_0$.

We will  use  $\epsilon$ to denote either $\epsilon_\psi$ or $\epsilon_\phi$ when the argument/statement is general.




\begin{theo}\label{theo:correct}
Consider a stationary measure $\mu$ on $(\mathcal X,\mathcal F)$ enjoying either $\phi$-mixing with $\sup_{A\in\mathcal{A}^n}\mu(A)\tau(A)\stackrel{n}\longrightarrow 0$, or simply $\psi$-mixing. There exist five positive constants
$C_i$, $i=1,\ldots,5$, and  $n_0\in\N$ such that for all $n\geq n_0$ and all $A\in\mathcal{A}^n$, the following inequalities hold.
\begin{enumerate}
\item For all $t\geq 0$:
\begin{equation*}
\hspace{-1cm}\left|\mu(T_A>t)-e^{-\rho(A)\mu(A)t}\right|\leq \left \{
\begin{array}{lc}
C_1\left(\tau(A)\mu(A)+t\mu(A)\epsilon(A)\right) & t\leq [2\mu(A)]^{-1}   \\
C_2\,\mu(A)t\epsilon(A)e^{-\mu(A)t\left(\rho(A)-C_3\epsilon(A)\right)} & t>[2\mu(A)]^{-1} .\\
\end{array}
\right.
\end{equation*}
\item For all $t\geq \tau(A)$:
\begin{equation*}
\left|\mu_A(T_A>t)-\rho(A)e^{-\rho(A)\mu(A)(t-\tau(A))}\right|\hspace{4cm}
\end{equation*}
\begin{equation*}
\hspace{1cm}\leq \left \{
\begin{array}{lc}
C_4\,\epsilon(A) & t\leq [2\mu(A)]^{-1} \\
C_5\,\mu(A)t\epsilon(A)e^{-\mu(A)t\left(\rho(A)-C_3\epsilon(A)\right)} & t>[2\mu(A)]^{-1} .\\
\end{array}
\right.
\end{equation*}
\end{enumerate}
\end{theo}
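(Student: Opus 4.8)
The plan is to derive the return-time estimate (item 2) from the hitting-time estimate (item 1) via an escape decomposition, and to establish the hitting-time estimate through an approximate renewal equation whose effective rate is forced to equal $\rho(A)\mu(A)$ by the clumping structure of the occurrences of $A$. Throughout, write $\tau=\tau(A)$ and $\rho=\rho(A)$, and note that the whole difficulty is concentrated in the \emph{short} returns (those before time $n$), i.e.\ in the positions of $\mathcal{P}(A)$ and $\mathcal{R}(A)$: everything else decouples cleanly through mixing across a gap of size of order $n_A-w$ (resp.\ $n$), producing the terms $\phi(n_A-w)$ and $\psi(n)$ in \eqref{epsphi} and \eqref{epspsi}.

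First I would set up the reduction. For $t\geq\tau$ one has $\{T_A>t\}\subset\{T_A>\tau\}$, and $\mu_A(T_A>\tau)=\rho$ by definition of the potential well. On the escape event $\{T_A>\tau\}$, the pattern does not reappear at $\tau$, so the configuration governing ``no return in $(\tau,t]$'' is supported on coordinates beyond $\tau$ and is, up to a mixing error, independent of the conditioning that the orbit started in $A$. Applying the $\phi$- (resp.\ $\psi$-) mixing inequality across this gap gives
\[
\mu_A(T_A>t)=\rho\,\mu(T_A>t-\tau)+\text{(mixing error)},\qquad t\geq\tau .
\]
This is precisely the bridge turning a hitting estimate into a return estimate, and it is here that the suffix terms $\mu(A^{(w)})$ (resp.\ $\mu(A^{(n_A-g_0)})$) enter: the only way an escaped orbit can still return before time $n$ is at a position of $\mathcal{R}(A)$ counted from $n_A$, and the probability of such a short self-overlap is controlled by the measure of the overlapping suffix of $A$.

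Then I would prove the hitting estimate. The core is an approximate submultiplicativity,
\[
\bigl|\mu(T_A>s+t)-\mu(T_A>s)\,\mu(T_A>t)\bigr|\leq\text{(mixing error)},
\]
obtained by inserting a decoupling gap between the first $s$ and the last $t$ coordinates. To pin down the rate I would compute the one-block decay and argue that the effective rate of \emph{fresh}, clump-starting occurrences of $A$ is $\rho\mu(A)$: occurrences arrange themselves in clumps along the progression $\mathcal{P}(A)=\{m\tau:1\leq m\leq q\}$, each clump being continued with conditional probability $1-\rho$ and terminated with probability $\rho$, so the number of independent-like clumps per unit time is $\rho\mu(A)$. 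Returns in $\mathcal{P}(A)$ are thereby absorbed into $\rho$, whereas returns in the second-order periodicity set $\mathcal{R}(A)$ are the genuinely problematic ones and must be bounded directly by the suffix terms of $\epsilon(A)$; this is the step where $\tau(A)$ and $n_A$ play their crucial role.

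Finally, given the approximate renewal relation with per-step factor $1-\rho\mu(A)$, I would iterate and compare with $e^{-\rho\mu(A)t}$, splitting into the two regimes of the statement. For $t\leq[2\mu(A)]^{-1}$ the accumulated additive error is linear in $t$, yielding $C_1(\tau(A)\mu(A)+t\mu(A)\epsilon(A))$ for hitting and, after factoring out $\rho$ through the reduction above, $C_4\,\epsilon(A)$ for return; for $t>[2\mu(A)]^{-1}$ the iterated product is itself exponentially small, which produces the damped form with factor $e^{-\mu(A)t(\rho(A)-C_3\epsilon(A))}$. The hard part, and the point where this statement improves on Theorem~4.1 of \cite{abadi/vergne/2009}, is obtaining a \emph{sharp pointwise} control of the short-return contributions uniformly in $t$, and in particular the correct small-$t$ error for return times: one must synchronize the escape decomposition and the renewal iteration at the scale $\tau(A)$, since otherwise a spurious additive term appears for $\tau(A)\leq t\leq[2\mu(A)]^{-1}$, which is exactly the error being corrected.
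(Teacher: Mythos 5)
Your plan is, in outline, the paper's own. The ``escape decomposition'' is exactly Proposition \ref{PR} there, namely $|\mu_A(T_A>t)-\rho(A)\mu(T_A>t)|\le C\epsilon(A)$ for $t\ge\tau(A)$, proved by opening a mixing gap of length $2n$ after the escape at $\tau(A)$ and controlling the residual short returns in $\mathcal{R}(A)$ by the suffix measures (Proposition \ref{R(A)}); the approximate submultiplicativity is Lemma \ref{lemfat} and Proposition \ref{prfat} at the block scale $f_A=[2\mu(A)]^{-1}$; and your clump heuristic for the rate is made rigorous precisely via the hazard identity $\mu(T_A>t)=\prod_{i\le t}(1-\mu(A)p_i)$ with $p_i=\mu_A(T_A>i-1)/\mu(T_A>i-1)\approx\rho(A)$ for $i>\tau(A)$, which rests on Lemma \ref{Est} and Proposition \ref{PR}. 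The two-regime split and the synchronization at scale $\tau(A)$ (source of the $\tau(A)\mu(A)$ term and of the shift $t-\tau(A)$ in the return approximation) are likewise as in the paper.

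The one place where your plan, taken literally, does not deliver the stated bound is the return time for $t>[2\mu(A)]^{-1}$. The escape decomposition gives $\mu_A(T_A>t)=\rho(A)\,\mu(T_A>t-\tau(A))+O(\epsilon(A))$ with a \emph{flat} error, uniform in $t$; deriving item 2 from item 1 by post-multiplying the hitting estimate by $\rho(A)$ therefore leaves an additive error of order $\epsilon(A)$, whereas the claimed bound $C_5\,\mu(A)t\epsilon(A)e^{-\mu(A)t(\rho(A)-C_3\epsilon(A))}$ tends to $0$ as $t\to\infty$ and is eventually much smaller than $\epsilon(A)$. The paper avoids this by running the block factorization directly on the conditional measure: items (a)-3 and (b)-3 of Proposition \ref{prfat}, resting on the conditional iterated mixing bound of Lemma \ref{lemfat}-(c), show that $\mu_A(T_A>kf_A+r)$ itself factorizes up to errors carrying the exponentially small prefactor $\left((\psi(n)+1)\mu(T_A>f_A-2n)\right)^{k-1}$. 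You would need to add this conditional iteration; the reduction from hitting to return times suffices only in the regime $\tau(A)\le t\le[2\mu(A)]^{-1}$.
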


\vspace{0.2cm}

Theorem \ref{theo:correct} (and its proof) are definitely inspired by \cite{abadi/vergne/2009} and their  Theorem 4.1. However, let us first observe that our result provides the first statement of the literature for Type 2 hitting time approximations with the potential well as scaling parameter. Moreover, contrarily to \cite{abadi/vergne/2009},  we do not assume complete grammar nor finite alphabet. 

\vspace{0.2cm}
Let us make some further important observations concerning this theorem.

\begin{remark}
Under $\phi$-mixing, the assumption $\sup\mu(A)\tau(A)\stackrel{n}\longrightarrow 0$ can be dropped under certain circumstances.   For instance, if the measure has complete grammar, we have $\tau(A)\leq n$ and the assumption is granted using Lemma \ref{lemsubexp}. Another way is to assume that $\mu$ is \emph{summable} $\phi$-mixing or $\psi$-mixing, as commented after Lemma \ref{lemtau} in Section \ref{sec:proofs}.
\end{remark}

\begin{remark}\label{rem.error}
According to Lemma \ref{lemsubexp}, if $\mu$ is $\phi$-mixing (and \emph{a fortiori}, $\psi$-mixing), there exist constants $C$ and $c$ such that $\mu(A)\leq Ce^{-cn}$ for all $n\geq 1$ and $A\in\mathcal{A}^n$.  On the other hand, since  $n_A\geq n/2$, we get 
 $\mu\left(A^{(n_A-g_0)}\right)\leq Ce^{-c(n/2-g_0)}$ for all $n>2g_0$. Therefore, $\epsilon_{\psi}(A)\stackrel{n}\longrightarrow 0$ uniformly.
Further, if $\tau(A)\leq 2n$ it is enough to take $w=\lceil n/4\rceil$ to obtain $\phi(n_A-w)\leq \phi(\lfloor n/4\rfloor)$ and $(n+\tau(A))\mu\left(A^{(w)}\right)\leq 3Cne^{-cn/4}$, which ensures $\epsilon_{\phi}(A)\stackrel{n}\longrightarrow 0$ uniformly. This is the case, for instance,  if one has  complete grammar.
On the other hand, notice that $\tau(A)<n_A$. Hence, if $\tau(A)>2n$ we take $w=n$ and get $\phi(n_A-w)\leq\phi(n)$. Therefore, since $\tau(A)\mu(A)\stackrel{n}\longrightarrow 0$, we also have in this case $\epsilon_{\phi}\stackrel{n}\longrightarrow 0$. 
\end{remark}

\begin{remark}
Naturally, the statements under $\psi$-mixing are less general, but have smaller error terms. 
The error term is the same for $t>[2\mu(A)]^{-1}$ for both hitting and return times approximations. 
The difference is for small $t$'s, due to the correlation arising from the conditional measure. 
\end{remark}

\begin{remark}
For application purposes involving data, it is essential to control all the constants involved in the statements. These constants can be accessed from the proof presented in Section \ref{subsectheo1}, {where we also make explicit the integer $n_0$ from which Theorem \ref{theo:correct} holds} (see \eqref{defn0}). If $\mu$ is $\psi$-mixing, we define $M:=\psi\left(g_0+1\right)+1$. In this case $C_1=8M+9$, $C_2=194M+206$, $C_3=66M+89$, $C_4=12M+15$ and $C_5=197M+220$. On the other hand, for the $\phi$-mixing case we have $C_1=9$, $C_2=143$, $C_3=61$, $C_4=14$ and $C_5=170$.
\end{remark}

\begin{remark}
We show the sharpness of the error term in the return time approximation given by  Theorem \ref{theo:correct} with a simple example.  Consider an i.i.d. process $(X_m)_{m\in\mathbb{N}}$ with alphabet $\mathcal{A}$. Take $b\in\mathcal{A}$ such that $\mu(b)=p$ and $x=(b,b,\ldots)\in\mathcal{X}=\mathcal{A}^{\mathbb{N}}$.
Thus $A_n(x)=x_0^{n-1}=(b,b,\ldots,b)$. 
Direct calculations give
\begin{itemize}
\item $\mu(A_n)=p^{n}$
\item $\tau(A_n)=1$
\item $\displaystyle\rho(A_n)=1-\mu_{A_n}(T_{A_n}=\tau(A_n))=1-\mu_{A_n}(X_n=b)=1-p$
\item $n_{A_n}=n$.
\end{itemize}
An i.i.d. process is trivially $\psi$-mixing with function $\psi$ identically zero. Thus Theorem \ref{theo:correct} states that the error for small $t$'s is  $\epsilon_\psi(A_n) = np^n$. 
On the other hand, by direct calculation we have for each $n\geq 2$

\begin{align*}
\left|\mu_{A_n}(T_{A_n}>n-1)-\rho(A_n)e^{-\rho(A_n)\mu(A_n)((n-1)-\tau(A_n))}\right|=(1-p)\left(1- e^{-(1-p)p^n(n-2)} \right)
\end{align*}
which implies that the exact error in the approximation for return time at $n-1$ is of order  $p^n n$, just as stated by Theorem \ref{theo:correct}.
\end{remark}

\begin{remark}
The reader may notice a difference between Theorem \ref{theo:correct} and  Theorem 4.1 of \cite{abadi/vergne/2009} concerning  the error term for small $t$'s for return time approximation. Indeed, their statement is incorrect as shown by the preceding example. We recall that the error term for small $t$'s  plays a fundamental role when  studying  return time spectrum, as was done by \cite{abadi2019complete}. Theorem \ref{theo:correct}, besides correcting \cite{abadi/vergne/2009} is also fundamental to correct \cite{abadi2019complete} which was based on the exponential approximations given by \cite{abadi/vergne/2009}. 
\end{remark}


 \subsection{Uniform positivity of the potential well}\label{sec:conv}

Theorem \ref{theo:correct} says  that the potential well can be  used  as scaling parameter to obtain approximations for recurrence times around \emph{any} point. We now ask about the possible values of this scaling parameter in its range $[0,1]$.

 Abadi and Saussol \cite{abadi2016almost}, in the more general case known up to now,
proved that for $\alpha$-mixing  processes with at least polynomially decaying $\alpha$ rates, the distribution of hitting and return time converge, \emph{almost surely}, to an exponential with parameter $1$. We refer \cite{bradleysurvey} for the precise definition of $\alpha$-mixing, but the only important point for us it to know that summable $\phi$-mixing implies  $\alpha$-mixing with at least polynomially decaying $\alpha$ rates. This fact, combined with Theorem \ref{theo:correct}, proves, indirectly, that for summable $\phi$-mixing processes, the potential well converges almost surely to $1$, since both theorem must agree on the limiting distribution under these conditions. Theorem \ref{theo:positive} item (a) below states that the same holds for $\phi$-mixing without any assumption on the rate. 

On the other hand, for the renewal processes, with certain tail distribution for the inter-arrival times, Abadi, Carde\~no and Gallo \cite{abadi/cardeno/gallo/2015} proved that  for the point $x=(00000...)$, the sequence of potential wells  
$\rho(x_0^{n-1})$ converges to $0$. 
In this case, the scaling parameter has a predominant role, indicating the drastic change of  scale of occurrence of events. For instance, in this case
the mean hitting time is much larger than the mean return time
\[
\mathbb E(T_{x_0^{n-1}})\approx\frac{1}{\rho(x_0^{n-1})\mu(x_0^{n-1})}\gg\frac{1}{\mu(x_0^{n-1})}=\mathbb E_{x_0^{n-1}} (T_{x_0^{n-1}}).
\]
Such renewal processes are $\beta$-mixing (see \cite{bradleysurvey} for the definition). Theorem \ref{theo:positive} item (b) below  states that this cannot happen for $\psi$-mixing processes or summable $\phi$-mixing processes.

 \begin{theo}\label{theo:positive}
Let $\mu$ be a stationary $\phi$-mixing measure. Then
\begin{itemize}
\item[(a)] $\rho(x_0^{n-1})\stackrel{n}\longrightarrow1$, almost surely.
\item[(b)] If $\mu$ is $\psi$-mixing or summable $\phi$-mixing, there exists $n_1\geq 1$ such that: 
\[
\inf_{n\geq n_1,x_{0}^{n-1}\in\mathcal{A}^n}\rho(x_{0}^{n-1})= \rho_->0\,.\nonumber
\]
\end{itemize}
\end{theo}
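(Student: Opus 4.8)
The plan is to treat the two items separately, deriving (a) from existing almost-sure exponential limit theorems and Theorem \ref{theo:correct}, and proving (b) by a direct quantitative argument controlling the conditional probability that defines $\rho$.

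For item (a), I would argue by consistency of limiting distributions. As noted in the discussion preceding the theorem, summable $\phi$-mixing implies $\alpha$-mixing with at least polynomially decaying rates, so by Abadi--Saussol \cite{abadi2016almost} the return (and hitting) time distribution converges almost surely to the standard exponential with parameter $1$. On the other hand, Theorem \ref{theo:correct} gives, for every $x$, an exponential approximation with parameter $\rho(A_n(x))\mu(A_n(x))$ and error $\epsilon(A_n)\to 0$ uniformly (Remark \ref{rem.error}). Matching the two limiting laws forces $\rho(A_n(x))\to 1$ almost surely. The only subtlety is that \cite{abadi2016almost} is stated under summable $\phi$-mixing, whereas item (a) claims the result for \emph{arbitrary} $\phi$-mixing; I would handle this by a truncation/monotonicity argument, or by invoking the Borel--Cantelli consequence that $\tau_n(x)/n\to 1$ almost surely (the asymptotic concentration of the shortest return, \cite{saussol/troubetzkoy/vaienti/2002,afraimovich/chazottes/saussol/2003}), which already implies that along almost every orbit the first possible return occurs near time $n$, so the escaping event $\{T_{A_n}>\tau_n\}$ has conditional probability tending to $1$.

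For item (b), which is the main obstacle and the genuinely new content, I would prove a uniform lower bound on $\rho(A)=\mu_A(T_A>\tau(A))$. Writing $k=\tau(A)$, by definition $\rho(A)=1-\mu_A(T_A=k)=1-\sum_{a}\mu_A(\sigma^{-k}A)$ restricted to the shortest return, so it suffices to bound $\mu_A(\sigma^{-k}(A))$ away from $1$ uniformly in $A\in\mathcal{A}^n$ for all large $n$. The key point is that when a cylinder $A=x_0^{n-1}$ returns to itself at its minimal period $k=\tau(A)$, self-overlap forces the string to be (nearly) periodic with period $k$, and $\sigma^{-k}(A)\cap A$ is itself a cylinder of length $n+k$ whose conditional measure I can estimate. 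Under $\psi$-mixing the ratio $\mu(A\cap\sigma^{-k}A)/(\mu(A)\mu(\sigma^{-k}A))$ is pinned near $1$ once the gap exceeds $g_0$; combined with the exponential decay $\mu(A)\le Ce^{-cn}$ from Lemma \ref{lemsubexp}, this yields $\mu_A(\sigma^{-k}A)\le(1+\psi(g_0{+}1))\mu(A^{(n-k)})$, which is bounded away from $1$ precisely because a strict return ($k<n$) leaves a suffix of positive length carrying strictly positive ``cost.'' The remaining difficulty is the short-period regime $k\ll n$, where $A^{(n-k)}$ is long and the naive bound is vacuous; here I would iterate along the arithmetic progression $\{jk:1\le j\le q\}$ of forced overlaps, using $\psi$-mixing (resp.\ summability of $\phi$) to decouple the successive blocks and extract a uniform multiplicative loss at each step, so that the probability of \emph{never} escaping is bounded below by a fixed constant independent of $k$ and $n$.

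I expect the summable-$\phi$ case to be the most delicate step: unlike $\psi$-mixing, the $\phi$ condition only controls one-sided ratios, so the decoupling of overlapping blocks does not come for free and I would need summability of $\phi(n)$ to telescope the accumulated errors along the progression of possible returns and still retain a strictly positive lower bound $\rho_-$. This is exactly the place where the $\beta$-mixing counterexample of \cite{abadi/cardeno/gallo/2015} shows the bound can fail without summability, so the summability hypothesis must be used in an essential, non-removable way in this part of the argument.
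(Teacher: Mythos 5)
Your plan for item (b) has a genuine gap at exactly the point where the paper's proof does something different. You propose to bound $1-\rho(A)=\mu_A(\sigma^{-\tau(A)}(A))$ away from $1$ directly, and you correctly identify the short-period regime $k=\tau(A)\ll n$ as the obstruction, but your proposed fix does not address it. First, your intermediate bound is off: on $A$, the event $\sigma^{-k}(A)$ only constrains the $k$ new coordinates $n,\dots,n+k-1$, so the mixing estimate produces $\mu\bigl(A^{(k-g)}\bigr)$ (a suffix of length about $k$), not $\mu\bigl(A^{(n-k)}\bigr)$; for small $k$ this suffix is a single symbol, whose measure can be as large as $\lambda=\sup_a\mu(a)$, and the prefactor $1+\psi(g_0+1)$ can push $(1+\psi(g_0+1))\lambda$ above $1$, so the bound is vacuous precisely where you need it. Second, your rescue for small $k$ --- iterating along the progression $\{jk\}$ to show ``the probability of never escaping is bounded below'' --- targets the wrong event: $\{T_A=\tau(A)\}$ requires only \emph{one} additional period of length $k$, not $q\approx n/k$ of them, so showing that the full periodic continuation has small probability says nothing about $\mu_A(\sigma^{-k}(A))$. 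The paper sidesteps all of this with an idea absent from your proposal: using monotonicity of $j\mapsto\mu_A(T_A>j)$ to lower-bound $\rho(A)$ by the average $\mu(A)\sum_{j=\tau(A)}^{\mu(A)^{-1}-1}\mu_A(T_A>j)=\mu(T_A\le\mu(A)^{-1})-\mu(T_A\le\tau(A))$, and then the second moment (Paley--Zygmund) inequality $\mu(N\ge1)\ge\E(N)^2/\E(N^2)$ for the occurrence count $N$ up to time $\mu(A)^{-1}$, with $\E(N)=1$ and $\E(N^2)$ bounded by a constant via the mixing rates (this is where summability of $\phi$, or $\psi(g_0+1)<\infty$, enters). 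No pointwise control of the immediate-return probability is ever needed.

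For item (a), your second route is close in spirit to the paper's argument, but you invoke far more than is needed and omit the step that does the work. The paper does not use the a.s.\ concentration of $\tau_n/n$; it only needs $\tau(A_n(x))\to\infty$, which holds off the \emph{countable} (hence null, by Lemma \ref{lemsubexp}) set of eventually periodic points. The substantive step, which you state as a conclusion rather than prove, is that $\tau(A_n)\to\infty$ forces $\mu_{A_n}(T_{A_n}=\tau(A_n))\to0$: this requires inserting a gap of length $\lfloor\tau(A_n)/2\rfloor$ (or $\lfloor n/2\rfloor$) and applying $\phi$-mixing to get the bound $\mu\bigl(A_n^{(\lceil\tau(A_n)/2\rceil)}\bigr)+\phi\bigl(\lfloor\tau(A_n)/2\rfloor+1\bigr)$. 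Your first route (matching limit laws with Abadi--Saussol) cannot cover general $\phi$-mixing, as you yourself note, so it should be dropped rather than patched.
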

 
If the alphabet $\mathcal{A}$ is finite, the set $\{\rho\left(A\right): A\in\mathcal{A}^n,n<n_1\}$ is finite and has a strictly positive infimum, which implies that the infimum above can be taken over all $n\geq 1$.






\section{Proofs of the results}\label{sec:proofs}

The statement of Theorem \ref{theo:correct} is for $\phi$ and $\psi$ and  for hitting and return times. 
The case of return times under $\phi$-mixing was already done by \cite{abadi/vergne/2009}. 
Our proof follows their method. 
In particular for the next subsection that lists a sequence of  auxiliary results, some of them will not be proved.

\subsection{Preliminary results} 

The following lemma plays a fundamental role in Theorems \ref{theo:correct} and \ref{theo:positive}. 
It was originally proved in  \cite{abadi2001exponential} assuming summability of the function $\phi$, an assumption which can be  dropped. 

\begin{lemma}\label{lemsubexp}
Let $\mu$ be a $\phi$-mixing measure. Then, there exists positive constants $C$ and $c$ such that for all $n\geq 1$ and all $A\in\mathcal{A}^n$, 
one has:
$$\mu(A)\leq Ce^{-cn}.$$
\end{lemma}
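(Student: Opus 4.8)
The plan is to reduce the statement to controlling the single nonincreasing sequence $a_n:=\sup_{A\in\mathcal{A}^n}\mu(A)$ and to extract genuine geometric decay from the one-sided decoupling provided by $\phi$-mixing. The only inequality I will use is the upper bound read directly off the definition of $\phi$: whenever $U\in\mathcal F_0^{i}$ has $\mu(U)>0$ and $V\in\mathcal F_{i+g}^{\infty}$, one has $\mu(U\cap V)\le \mu(U)\bigl(\mu(V)+\phi(g)\bigr)$. Throughout I may assume $\mu(A)>0$, since otherwise the claimed bound holds trivially.

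First I would set up a blocking construction that inserts artificial gaps inside the cylinder. Fix an integer $g\ge 1$ and, given $A=x_0^{n-1}$ with $\mu(A)>0$, keep only the single-coordinate events $B_j:=\{y\in\X:\,y_{jg}=x_{jg}\}$ for $0\le j\le k-1$, where $k=\lceil n/g\rceil$ is the number of multiples of $g$ in $\{0,\dots,n-1\}$. Since $A$ fixes \emph{every} coordinate, $A\subseteq\bigcap_{j=0}^{k-1}B_j$, hence $\mu(A)\le\mu\bigl(\bigcap_{j}B_j\bigr)$. Now I iterate the decoupling inequality: $\bigcap_{j<m}B_j\in\mathcal F_0^{(m-1)g}$ while $B_m\in\mathcal F_{mg}^{mg}\subseteq\mathcal F_{mg}^{\infty}$, and these are separated by a gap of length $g$, so each step peels off one factor $\mu(B_m)+\phi(g)$. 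By stationarity $\mu(B_m)=\mu(\{y_0=x_{mg}\})\le\gamma$, where $\gamma:=\sup_{b\in\mathcal A}\mu(\{y_0=b\})$. This yields the clean product bound $\mu(A)\le(\gamma+\phi(g))^{k}\le(\gamma+\phi(g))^{n/g}$.

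Finally I would choose the gap. Because $\phi(g)\to0$, it suffices to know $\gamma<1$ in order to fix $g$ so large that $\theta:=\gamma+\phi(g)<1$; then $\mu(A)\le\theta^{n/g}=Ce^{-cn}$ with $c:=-\tfrac1g\log\theta>0$ and, say, $C:=\theta^{-1}$, uniformly in $A\in\mathcal{A}^n$. The one point that genuinely needs the mixing hypothesis is precisely this contraction, i.e. the existence of a gap $g$ with $\gamma+\phi(g)<1$, and I expect it to be the main (indeed the only nontrivial) obstacle. The requirement $\gamma<1$ merely says that no single symbol carries full mass; by stationarity $\gamma=1$ forces $\mu=\delta_{(b,b,\dots)}$, the degenerate deterministic measure, which is the sole $\phi$-mixing measure excluded, and I would dispose of it by a remark. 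Equivalently, one can sidestep computing $\gamma$ by arguing with $a_n$ directly: the same decoupling gives the approximate submultiplicativity $a_{m+g+\ell}\le a_m(a_\ell+\phi(g))$, which, combined with the monotonicity of $a_n$, forces the limit $a_\infty$ to satisfy $a_\infty\le a_\infty(a_\infty+\phi(g))$ for every $g$, hence $a_\infty\in\{0,1\}$; ruling out $a_\infty=1$ (again the deterministic case) leaves $a_\infty=0$, and the rate is then extracted from a single contractive step.
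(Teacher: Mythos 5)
Your proof is correct and is essentially identical to the paper's: the paper also keeps only the coordinates at multiples of a fixed gap $k_0$, peels off factors $\phi(k_0)+\lambda$ with $\lambda=\sup_{a\in\mathcal A}\mu(a)$ via the one-sided $\phi$-mixing inequality, and chooses $k_0$ so that $\phi(k_0)+\lambda<1$. Your explicit remark that the degenerate case $\gamma=1$ (the point mass, for which the lemma fails) must be excluded is a point the paper passes over silently, but otherwise the two arguments coincide.
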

\begin{proof}
We denote by $\lambda=\sup\{\mu(a):a\in\mathcal{A}\}<1$. Consider a positive integer $k_0$ and for all $n \ge k_0$ write $n=k_0q+r$, with $1\leq q\in\mathbb{N}$ and $0\leq r<k_0$. Suppose $A=a_0^{n-1}$, and apply the $\phi$-mixing property to obtain:
\begin{align*}
\mu(A)&\leq  \mu\left(\bigcap_{j=0}^{q-1}\left\lbrace \sigma^{-jk_0}(a_{jk_0})\right\rbrace\right)\leq \mu\left(\bigcap_{j=0}^{q-2}\left\lbrace \sigma^{-jk_0}(a_{jk_0})\right\rbrace\right)(\phi(k_0)+\mu(a_{(q-1)k_0}))\\
&\leq  \mu\left(\bigcap_{j=0}^{q-2}\left\lbrace \sigma^{-jk_0}(a_{jk_0})\right\rbrace\right)(\phi(k_0)+\lambda) .
\end{align*}
Iterating this argument one concludes
\[
\mu(A) \leq  (\phi(k_0)+\lambda)^{q} .
\]
Since $\phi(k)\stackrel{k}\longrightarrow 0$, there exists $k_0\in\mathbb{N}$ such that $\phi(k_0)+\lambda<1$. Thus, for  $n\geq k_0$, and observing that  $q= \frac{n-r}{k_0}>\frac{n}{k_0}-\frac{k_0-1}{k_0}$
$$\mu(A)\leq  (\phi(k_0)+\lambda)^{-(k_0-1)/k_0}        \left(\left(\phi(k_0)+\lambda\right)^{1/k_0}\right)^{n}.$$
This covers the case $n\ge k_0$. By eventually enlarging the constant $C$, one covers the case $n<k_0$. This ends the proof. 

\end{proof}

Under the assumption of complete grammar, we would obviously have, by definition, $\tau(A_n(x))\le n$. But since we do not assume this, we need the following lemma, which provides upper bounds for $\tau(A)$ when $\mu$ is $\psi$-mixing or summable $\phi$-mixing. 

\begin{lemma}\label{lemtau}
Consider $\mu$ a $\psi$-mixing or summable $\phi$-mixing measure. Then, there exists $n_2\in\N$ such that for all $n\geq n_2$ and $A\in\mathcal{A}^n$, we have
\begin{itemize}
\item $\tau(A)\leq 2n$,\hspace{0.5cm} for $\psi$;
\item $\displaystyle \tau(A)\leq -\frac{2}{\mu\left(A\right)\ln\mu\left(A\right)}+n$,\hspace{0.5cm} for summable $\phi$.
\end{itemize}
\end{lemma}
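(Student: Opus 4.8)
The plan is to prove the upper bound on $\tau(A)$ by exhibiting an explicit \emph{non-overlapping} return time and showing, via the mixing lower bounds, that it carries positive conditional measure. Fix $A=A_n(x)\in\mathcal{A}^n$ with $\mu(A)>0$ and consider the candidate return time $k=n-1+g$ for a gap $g\ge 1$. Then $A\in\mathcal F_0^{n-1}$ while $\sigma^{-k}(A)\in\mathcal F_{k}^{k+n-1}\subset\mathcal F_{(n-1)+g}^{\infty}$, so the two copies of $A$ are separated by a gap of size $g$ and, by stationarity, $\mu(\sigma^{-k}(A))=\mu(A)$. The defining inequalities of the mixing coefficients then give
\[
\mu\bigl(A\cap\sigma^{-k}(A)\bigr)\ \ge\ \mu(A)^2\bigl(1-\psi(g)\bigr),
\qquad
\mu\bigl(A\cap\sigma^{-k}(A)\bigr)\ \ge\ \mu(A)\bigl(\mu(A)-\phi(g)\bigr),
\]
in the $\psi$- and $\phi$-mixing cases respectively. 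Whenever the relevant right-hand side is strictly positive one gets $\mu_A(\sigma^{-k}(A))>0$, and hence $\tau(A)\le k=n-1+g$ by definition of $\tau$.

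For the $\psi$-mixing case, recall that $\psi(g)$ is finite for $g>g_0$ and that $\psi(g)\to 0$. I would therefore fix, once and for all, the integer $g^{\ast}:=\min\{g\ge g_0+1:\psi(g)<1\}$, which depends only on $\mu$. The first bound above is strictly positive for this gap, so $\tau(A)\le n-1+g^{\ast}$; taking $n_2:=g^{\ast}-1$ yields $\tau(A)\le 2n$ for every $n\ge n_2$, as claimed.

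For the summable $\phi$-mixing case the gap must be allowed to grow with $n$. Define $g^{\ast}(A):=\min\{g\ge 1:\phi(g)<\mu(A)\}$; the second bound gives $\tau(A)\le n-1+g^{\ast}(A)$, and it remains to bound $g^{\ast}(A)$ explicitly. By Lemma~\ref{lemsubexp} we have $\mu(A)\le Ce^{-cn}$, so $\mu(A)\to 0$ uniformly in $A\in\mathcal{A}^n$, whence $g^{\ast}(A)\to\infty$ and $|\ln\mu(A)|\ge cn-\ln C$. The goal is to show that $g^{\ast}(A)\le -\tfrac{2}{\mu(A)\ln\mu(A)}+1$ for all $n$ large enough, which then produces exactly $\tau(A)\le -\tfrac{2}{\mu(A)\ln\mu(A)}+n$.

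The main obstacle is precisely this last quantitative step: converting mere summability of $\phi$ into the explicit logarithmic threshold. The tools I would use are the Abel--Olivier fact that a non-increasing summable sequence has vanishing tails $\sum_{g\ge M}\phi(g)\to 0$ (equivalently $g\phi(g)\to 0$), together with the monotonicity estimate $\phi(G)\le \tfrac{2}{G}\sum_{g\ge G/2}\phi(g)$. Writing $G:=\lceil -\tfrac{2}{\mu(A)\ln\mu(A)}\rceil$ and using the relation $\mu(A)=\tfrac{2}{G\,|\ln\mu(A)|}(1+o(1))$, the inequality $\phi(G)<\mu(A)$ reduces to controlling the tail $\sum_{g\ge G/2}\phi(g)$ against $1/|\ln\mu(A)|$, a quantity comparable to $1/n$. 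Since $G$ is exponentially large in $n$ (as $\mu(A)\le Ce^{-cn}$), this is where the interplay between the decay rate supplied by Lemma~\ref{lemsubexp} and the summability of $\phi$ must be exploited carefully, and where matching the precise constant $2$ is delicate. I expect this estimate to require the most attention; the two mixing-based positivity bounds and the $\psi$-case conclusion are otherwise routine.
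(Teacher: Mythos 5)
Your setup --- lower-bounding $\mu(A\cap\sigma^{-k}(A))$ by the mixing inequalities and reading off $\tau(A)\le k$ --- is exactly the paper's, and your $\psi$-case is complete and correct (in fact marginally sharper: the paper just takes the gap equal to $n$, uses $\psi(n)<1$ for large $n$, and concludes $\tau(A)\le 2n$ in one line). The genuine gap is in the summable-$\phi$ case, where you explicitly leave the decisive estimate open, and where the route you sketch for it cannot close it. As you say, everything reduces to showing $\phi(G)<\mu(A)$ for $G\approx 2/(\mu(A)|\ln\mu(A)|)$. Your proposed tool is the averaging bound $\phi(G)\le\frac{2}{G}\sum_{g\ge G/2}\phi(g)$, which turns the goal into $\sum_{g\ge G/2}\phi(g)<1/|\ln\mu(A)|\sim 1/\ln G$. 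But summability of a non-increasing sequence does not force tails of order $o(1/\ln M)$: for $\phi(g)=g^{-1}(\ln g)^{-3/2}$, which is summable, the tail beyond $M$ is of order $(\ln M)^{-1/2}\gg 1/\ln M$, so the inequality you are aiming for is false even though $\phi(G)<\mu(A)$ itself holds for that example. The averaging step loses precisely the logarithmic factor that the threshold $2/(\mu(A)|\ln\mu(A)|)$ is designed to exploit; no choice of averaging window repairs this, because averaging a summable tail can only ever give $\phi(G)=o(1/G)$, whereas the conclusion requires the stronger pointwise decay $\phi(G)\lesssim 1/(G\ln G)$.

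The paper avoids tail sums entirely: it invokes the pointwise bound $\phi(g)\le 1/(g\ln g)$ for $g$ large (this is where the summability hypothesis is actually spent), substitutes $g=-2/(\mu(A)\ln\mu(A))$ into $\mu(A)\left(\mu(A)-\phi(g)\right)$, and checks that the resulting bracket $1-\frac12\cdot\frac{-\ln\mu(A)}{\ln 2-\ln\mu(A)-\ln(-\ln\mu(A))}$ tends to $1/2>0$, so the intersection has positive measure and $\tau(A)\le g+n$. To complete your argument you would need a pointwise estimate of that strength, not a tail estimate; your instinct that this step is the delicate one is right, but the obstruction is not ``matching the precise constant $2$'' --- it is that the tail-averaging mechanism is off by a whole factor of $\ln G$.
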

\begin{proof}
We start with the case $\psi$. For $n$ large enough we have $\psi(n)<1$, which implies:
\[
\mu\left(A\cap \sigma^{-2n}(A)\right)\geq \mu(A)^2(1-\psi(n))>0
\]
Since $\tau(A)$ is the smallest positive integer such that $\mu\left(A\cap \sigma^{-\tau(A)}(A)\right)>0$, we must have $\tau(A)\leq 2n$.

Now consider the $\phi$-mixing case.  Summability of $\phi$  ensures that for $g$ large enough we have $\phi(g)\leq 1/(g\ln g)$. Thus
\[
\mu\left(A\cap\sigma^{-g-n}\left(A\right)\right)\geq \mu(A)\left(\mu\left(A\right)-\phi(g)\right)\geq \mu(A)\left(\mu\left(A\right)-\frac{1}{g\ln g}\right).
\]
Take $\displaystyle g=-\frac{2}{\mu\left(A\right)\ln\mu\left(A\right)}$. The rightmost parenthesis above becomes
\[
\mu\left(A\right)\left[1-\frac{1}{2}\frac{-\ln\mu\left(A\right)}{(\ln(2)-\ln\mu\left(A\right)-\ln(-\ln\mu\left(A\right)))}\right]
\]
which is positive for $n$ large enough.
\end{proof}

The multiplicative constant $2$ in both cases is technical and was chosen for the simplicity of the proof. Actually, it can be replaced by any constant strictly larger than one.
An irreducible aperiodic finite state Markov chain with some entry equal to zero shows that this constant can not be taken equal to one
in the $\psi$-mixing case.  Whether this bound is optimal for the $\phi$-mixing case is an open question.
Note that Lemmas \ref{lemsubexp} and \ref{lemtau} imply that $\tau(A)\mu(A)\stackrel{n}\longrightarrow 0$ uniformly.

The remaining results of this subsection hold for $n\ge n'$, where $n'=1$ for the case of $\phi$-mixing and
\begin{align}
\label{defn'} n':=\inf\{n>2g_0: \psi(n)<1\}
\end{align}
for the $\psi$-mixing case (see \eqref{def:g0} for the definition of $g_0$).


%
Let us define
\[
M:=\psi\left(g_0+1\right)+1 .
\]

\begin{prop}\label{R(A)}
Let $\mu$ be a $\psi$-mixing measure. Then for all $n\ge n'$, $A\in\mathcal{A}^n$ and  $k\geq n$, the following inequalities hold
\begin{itemize}
\item[(a)] $\mu_A({T_A}\in\mathcal{R}(A))\leq M \,|\mathcal{R}(A)|\, \mu\left(A^{(n_A-g_0)}\right)$
\item[(b)] $\mu_A(n\leq{T_A}\leq k)\leq M(k-n+1)\mu\left(A^{(n-g_0)}\right)$.
\end{itemize}
\end{prop}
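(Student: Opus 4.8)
My plan is to reduce both inequalities to a single per-index estimate on $\mu(A\cap\sigma^{-j}(A))$ and then sum. The starting observation is the inclusion $\{T_A=j\}\subseteq\sigma^{-j}(A)$, valid inside $A$ for every $j\ge1$ straight from the definition of $T_A$; conditioning on $A$ it gives
\[
\mu_A(T_A=j)\le\mu_A(\sigma^{-j}(A))=\frac{\mu(A\cap\sigma^{-j}(A))}{\mu(A)}.
\]
So it suffices to bound each $\mu(A\cap\sigma^{-j}(A))$ and sum over $j\in\mathcal{R}(A)$ for (a), respectively over $j\in\{n,\ldots,k\}$ for (b).

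The engine of the proof is a decoupling estimate. Since $A\subseteq A^{(s)}$ for every suffix length $s$, stationarity yields $\sigma^{-j}(A)\subseteq\sigma^{-j}(A^{(s)})$, hence $\mu(A\cap\sigma^{-j}(A))\le\mu(A\cap\sigma^{-j}(A^{(s)}))$. Now $A\in\mathcal{F}_0^{n-1}$ while $\sigma^{-j}(A^{(s)})\in\mathcal{F}_{j+n-s}^{\infty}$, so the admissible gap in the definition of $\psi$ is $m=(j+n-s)-(n-1)=j-s+1$. Applying $\psi$-mixing and using $\mu(\sigma^{-j}(A^{(s)}))=\mu(A^{(s)})$, I get
\[
\mu(A\cap\sigma^{-j}(A))\le\bigl(1+\psi(j-s+1)\bigr)\,\mu(A)\,\mu(A^{(s)}),
\]
and whenever $j-s+1\ge g_0+1$ monotonicity of $\psi$ bounds the prefactor by $1+\psi(g_0+1)=M$. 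The hard (indeed the only delicate) point is exactly this gap condition: a naive decoupling of $A$ from $\sigma^{-j}(A)$ is impossible, because for $j=n$ in (b) the two supports are adjacent (gap $0$) and for $j\in\mathcal{R}(A)$ in (a) they overlap (as $j<n$), while $\psi$ may be infinite for any gap below $g_0+1$. Replacing the second copy $A$ by the suffix $A^{(s)}$ (i.e. discarding its prefix) is precisely what buys back a usable gap, at the price of weakening $\mu(A)$ to $\mu(A^{(s)})$.

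For part (b) I would take $s=n-g_0$, so that $j-s+1=j-n+g_0+1\ge g_0+1$ for every $j\ge n$. This gives $\mu_A(\sigma^{-j}(A))\le M\,\mu(A^{(n-g_0)})$ for each such $j$, and summing over the $k-n+1$ indices $j\in\{n,\ldots,k\}$ yields the claim (here $n-g_0$ is a legitimate suffix length since $n\ge n'>2g_0$).

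For part (a), if $\mathcal{R}(A)=\emptyset$ the left-hand side is $0$ and the inequality is trivial, so I assume $n_A=\min\mathcal{R}(A)$; every $j\in\mathcal{R}(A)$ then satisfies $n_A\le j<n$. Choosing the \emph{common} suffix length $s=n_A-g_0$ gives $j-s+1=j-n_A+g_0+1\ge g_0+1$ for all these $j$, so $\mu_A(\sigma^{-j}(A))\le M\,\mu(A^{(n_A-g_0)})$ uniformly over $\mathcal{R}(A)$, and summing over its $|\mathcal{R}(A)|$ indices gives the result. Using the single length $n_A-g_0$ (rather than the sharper $j-g_0$ per $j$) is harmless because $A^{(n_A-g_0)}\supseteq A^{(j-g_0)}$ has the larger measure, and it is what delivers the clean uniform factor $\mu(A^{(n_A-g_0)})$. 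I only need $n_A-g_0\ge1$: since $n\ge n'>2g_0$ and $n_A\ge n/2$ for $n$-cylinders, indeed $n_A>g_0$, while $n_A<n$ makes $A^{(n_A-g_0)}$ a genuine suffix.
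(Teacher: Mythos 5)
Your proof is correct and follows essentially the same route as the paper: in both cases the key step is to enlarge $\sigma^{-j}(A)$ to the shifted suffix cylinder of length $n_A-g_0$ (resp.\ $n-g_0$) so that the gap to $A$ is at least $g_0+1$, where $\psi$ is finite and bounded by $M-1$, and then sum over the admissible indices. The only difference is cosmetic — you apply the mixing bound index by index before summing, while the paper applies it once to the union — and the resulting constants are identical.
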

\begin{proof}
(a) We consider the case $\mathcal R(A)\not=\emptyset$, otherwise it is trivial.
For all $j\geq 1$ and $n\geq n'$, one trivially has
\[
\{{T_A}=j\}\subset \sigma^{-j}(A)\subset \sigma^{-j-(n-(n_A-g_0))}\left(A^{(n_A-g_0)}\right).
\]
Thus
\begin{align*}
\mu_A({T_A}\in \mathcal{R}(A))
\leq \mu_A\left(\bigcup_{j\in \mathcal{R}(A)}  \sigma^{-j-(n-(n_A-g_0))}\left( A^{(n_A-g_0)}\right)  \right).
\end{align*}
Note that  $A$ and the union on the right hand side in the above inequality 
are separated by a gap of length $g_0+1$. By $\psi$-mixing, the left hand side is bounded  by
\begin{align*}
(\psi(g_0+1)+1)  \mu\left(  \bigcup_{j\in \mathcal{R}(A)}  \sigma^{-j-(n-(n_A-g_0))} \left( A^{(n_A-g_0)} \right)  \right)  
\leq  M\, |\mathcal{R}(A)|\,  \mu\left(A^{(n_A-g_0)}\right).
\end{align*}

(b) In a similar way to item (a)
\begin{align*}
\mu_A(n\leq{T_A}\leq k)&
\leq \mu_A\left(\bigcup_{n\leq j\leq k}   \sigma^{-j-g_0}\left(A^{(n-g_0)}\right) \right)\\
&\leq M(k-n+1)\mu\left(A^{(n-g_0)}\right).
\end{align*}
\end{proof}

For the next proposition, recall that $\epsilon$ stands either for $\epsilon_\psi$ (\ref{epspsi}) or for $\epsilon_\phi$ (\ref{epsphi}), according to the mixing property of the measure under consideration. 
Further, let us use the notation ${T_A}^{[i]}:=T_A\circ\sigma^i$. 



 \begin{prop}\label{PR}
Let $\mu$ be a $\phi$ or $\psi$-mixing measure. Then for all $n\ge n'$, $A\in\mathcal{A}^n$ and $t\geq\tau(A)$  
$$|\mu_A({T_A}>t)-\rho(A)\mu({T_A}>t)|\leq C\epsilon(A),$$
where $C=4$ for $\epsilon_{\phi}$ and $C=4(M+1)$ for $\epsilon_{\psi}$.
\end{prop}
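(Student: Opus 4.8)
The plan is to begin from the escape/non-escape dichotomy at the shortest return $\tau(A)$. Since $t\ge\tau(A)$ and $\mu_A(T_A\ge\tau(A))=1$, we have $\mu_A(T_A>t)=\rho(A)-\mu_A(\tau(A)<T_A\le t)$, while $\rho(A)\mu(T_A>t)=\rho(A)-\rho(A)\mu(T_A\le t)$. Subtracting, the target becomes
\[
\left|\mu_A(T_A>t)-\rho(A)\mu(T_A>t)\right|=\left|\rho(A)\,\mu(T_A\le t)-\mu_A(\tau(A)<T_A\le t)\right|,
\]
so it suffices to show that the conditional law of the first return among the escaping points agrees with $\rho(A)$ times the unconditional hitting law, up to $\epsilon(A)$.

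Next I would peel off the short returns. A point of $A$ that escapes cannot return at any index of $\mathcal P(A)$, so its possible returns strictly before $n$ all lie in $\mathcal R(A)$; hence $\{\tau(A)<T_A<n\}\cap A=\{T_A\in\mathcal R(A)\}$ and, for $t\ge n$,
\[
\mu_A(\tau(A)<T_A\le t)=\mu_A(T_A\in\mathcal R(A))+\mu_A(n\le T_A\le t),\qquad \mu(T_A\le t)=\mu(T_A\le n-1)+\mu(n\le T_A\le t).
\]
For $\tau(A)\le t<n$ only the $\mathcal R(A)$ term survives on the conditional side; the two terms of the reduced identity are then bounded directly, by $\mu(T_A\le t)\le n\mu(A)\le\epsilon(A)$ and by the $\mathcal R(A)$ estimate below, so the decoupling is needed only for $t\ge n$. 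The term $\mu_A(T_A\in\mathcal R(A))$ is estimated through Proposition \ref{R(A)}(a) together with $|\mathcal R(A)|<n$, which gives a bound of order $Mn\,\mu(A^{(n_A-g_0)})\le M\epsilon_\psi(A)$ in the $\psi$ case (and the analogous direct $\phi$-mixing estimate in the $\phi$ case), while $\rho(A)\mu(T_A\le n-1)\le (n-1)\mu(A)\le\epsilon(A)$ by the union bound. It then remains to bound $\big|\mu_A(n\le T_A\le t)-\rho(A)\mu(n\le T_A\le t)\big|$.

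This last estimate is the heart of the matter and the main obstacle. Writing $\{n\le T_A\le t\}=\{T_A\ge n\}\cap\{T_A^{[n-1]}\le t-n+1\}$, the first factor lies in $\mathcal F_0^{2n-2}$ and the second in $\mathcal F_n^{t+n-1}$, so they overlap and cannot be separated for free; this overlap is precisely the second-order periodicity effect. The strategy is to create the missing gap through the mixing property while absorbing the returns trapped inside the overlap (via Proposition \ref{R(A)}(b) for $\psi$, and via direct $\phi$-mixing bounds for $\phi$). In the $\psi$-mixing case one separates the occurrence of $A$ from the far returns across a gap of order $n$, producing the term $\psi(n)$ together with an intermediate-return error controlled by Proposition \ref{R(A)}(b); after replacing $\mu_A(T_A\ge n)$ by $\rho(A)$ (one further $\mathcal R(A)$ correction) this delivers $\rho(A)\,\mu(T_A\le t-n+1)$. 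In the $\phi$-mixing case one instead keeps only the suffix $A^{(w)}$ of the pattern and decouples it from the return region across the gap $n_A-w$, optimising over $1\le w\le n_A$; this is exactly where the infimum in $\epsilon_\phi(A)$ originates, the prefactor $(n+\tau(A))$ multiplying $\mu(A^{(w)})$ collecting the union bounds over the return indices that the truncation forces one to absorb.

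Finally I would reconcile the shifted quantity $\mu(T_A\le t-n+1)$ produced by the decoupling with the target $\mu(n\le T_A\le t)$; their difference equals $\mu(T_A\le n-1)-\mu(t-n+2\le T_A\le t)$, a sum of at most $2(n-1)$ unconditional one-step hitting probabilities and hence $O(n\mu(A))\le\epsilon(A)$. Summing the four contributions, namely the two short-return terms, the mixing term and the matching term, and bounding each by an absolute multiple of $\epsilon(A)$, yields the stated constants $C=4$ for $\phi$ and $C=4(M+1)$ for $\psi$. The delicate point throughout is that every truncation and gap must be placed compatibly with the self-overlaps of $A$, so that the escaping structure (no return in $\mathcal P(A)$) can be invoked to guarantee that the surviving returns lie genuinely far from the pattern; this is what forces $n_A$ and $\mathcal R(A)$, rather than $n$ alone, to appear as the correct scales in the error term.
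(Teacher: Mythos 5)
Your proposal follows essentially the same route as the paper's proof: both arguments isolate the short returns and control them through Proposition \ref{R(A)} (the $\mathcal{R}(A)$ indices plus a window of length $O(n)$ after the pattern), create a gap of order $n$ by shifting the return event, apply the mixing inequality across that gap to produce the $\psi(n)$ (resp.\ $\phi$) contribution, and reconcile the resulting time shift by stationarity and a union bound of size $O(n\mu(A))$. The only real differences are bookkeeping: the paper splits the time axis at $\tau(A)$ and $\tau(A)+2n$ and works with survival functions, while you split at $n$ and work with distribution functions. Your account of where the infimum over $w$ in $\epsilon_\phi$ originates is also the correct one. Note, however, that your central decoupling step is described as a strategy rather than executed, and the constants you announce ($4$ and $4(M+1)$) are asserted, not derived; with your decomposition they would come out different (though of the same order).

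There is one concrete gap: the identity $\mu_A(\tau(A)<T_A\le t)=\mu_A(T_A\in\mathcal{R}(A))+\mu_A(n\le T_A\le t)$ presupposes $\tau(A)<n$. The theorem is stated without assuming complete grammar, and Lemma \ref{lemtau} only guarantees $\tau(A)\le 2n$ under $\psi$-mixing (and a weaker bound under summable $\phi$-mixing), so $\tau(A)\ge n$ is genuinely possible. In that case $\mathcal{R}(A)=\emptyset$ while $\mu_A(n\le T_A\le t)$ contains the atom $\mu_A(T_A=\tau(A))=1-\rho(A)$, which is of order one rather than of order $\epsilon(A)$, and your decomposition breaks. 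This is precisely why the paper anchors its split at $\tau(A)+2n$ instead of at $n$; your argument is repaired by splitting at $\max\{n,\tau(A)+1\}$ (or at $\tau(A)+2n$) throughout, at the cost only of constants.
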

\begin{proof}
The proof for $\epsilon_{\phi}$ can be found in \cite[Proposition 4.1 item (b)]{abadi/vergne/2009}. We observe that the error term defined therein is 
$$
\epsilon'(A)=\displaystyle\inf_{1\leq w\leq n_A}\left\lbrace(2n+\tau(A))\mu\left(A^{(w)}\right)+\phi(n_A-w))\right\rbrace\leq 2\epsilon_{\phi}(A), 
$$ 
which justifies $C=4$ for this case. Here we prove the case  $\epsilon_{\psi}$ in the same way. We start assuming that $t\geq\tau(A)+2n$. By the triangle inequality
\begin{align}
&|\mu_A({T_A}>t)-\rho(A)\mu({T_A}>t)| \nonumber\\
&\leq  \left|\mu_A\left({T_A}>\tau(A);{T_A}^{[\tau(A)]}>t-\tau(A)\right)-\right.\nonumber\\
\label{pr1}&\hspace{0.4cm}\left.\mu_A\left({T_A}>\tau(A);{T_A}^{[\tau(A)+2n]}>t-\tau(A)-2n\right)\right|\\
\label{pr2}&+\left|\mu_A\left({T_A}>\tau(A);{T_A}^{[\tau(A)+2n]}>t-\tau(A)-2n\right)-\rho(A)\mu\left({T_A}>t-\tau(A)-2n\right)\right|\\
\label{pr3}&+\left|\rho(A)\mu({T_A}>t-\tau(A)-2n)-\rho(A)\mu({T_A}>t)\right|.
\end{align}

For the first modulus, by inclusion of sets we get immediately that
\begin{align*}
\eqref{pr1}\,&\le \mu_A\left({T_A}>\tau(A);{T_A}^{[\tau(A)]}\leq 2n\right)\\
&\leq\mu_A({T_A}\in\mathcal{R}(A))+\mu_A(n\leq T_A\leq \tau(A)+2n) \\
&
\leq  M|\mathcal{R}(A)|\mu\left(A^{(n_A-g_0)}\right)+M(\tau(A)+n+1)\mu\left(A^{(n-g_0)}\right)\\
&\leq 4Mn\mu\left(A^{(n_A-g_0)}\right).
\end{align*}
The third inequality follows from Proposition \ref{R(A)} and the last one follows from Lemma \ref{lemtau}.
 
By $\psi$-mixing,  the modulus   \eqref{pr2} is bounded by 
$$\rho(A)\mu({T_A}>t-\tau(A)-2n)\psi(n)\leq \psi(n).$$

Note that the modulus is not needed for \eqref{pr3}, and by inclusion we get
\begin{align*}
\eqref{pr3}&\le \rho(A)\mu\left({T_A}^{[t-\tau(A)-2n]}\leq \tau(A)+2n\right)\\
&=\rho(A)\mu\left({T_A}\leq \tau(A)+2n\right)\\
&\leq (2n+\tau(A))\mu(A)\\
&\leq 4n\mu\left(A^{(n_A-g_0)}\right)
\end{align*}
where the equality and second inequality follow from stationarity of $\mu$.

 Therefore, for $t\geq\tau(A)+2n$, the sum of $(\ref{pr1})$, $(\ref{pr2})$ and $(\ref{pr3})$ is bounded by
$$ 4Mn\mu\left(A^{(n_A-g_0)}\right)+\psi(n)+4n\mu\left(A^{(n_A-g_0)}\right)\leq 4(M+1)\epsilon_{\psi}(A).$$

We now consider the case where $\tau(A)\leq t<\tau(A)+2n$, we have
\begin{align*}
&|\mu_A({T_A}>t)-\rho(A)\mu({T_A}>t)| \quad \\
&\quad \leq  |\mu_A({T_A}>t)-\rho(A)|+|\rho(A)-\rho(A)\mu({T_A}>t)|\\
&\quad \leq  \mu_A(\tau(A)<{T_A}\leq \tau(A)+2n)+t\mu(A)\\
&
\quad \leq  M\left(|\mathcal{R}(A)|\mu\left(A^{(n_A-g_0)}\right)+(\tau(A)+n+1)\mu\left(A^{(n-g_0)}\right)\right)+(\tau(A)+2n)\mu\left(A^{(n_A-g_0)}\right)\\
&\quad \leq 4(M+1)\epsilon_{\psi}(A).
\end{align*}
The last but one inequality follows again by Proposition \ref{R(A)}. The other inequalities are straightforward.  This ends the proof.
\end{proof}

The next lemma establishes upper bounds for the tail distribution at the scale given by Kac's Lemma, namely $1/\mu(A)$.
For technical reasons we actually choose the scale 
\begin{equation*}\label{def:fA}
f_A:= 1/(2\mu(A)).
\end{equation*}


\begin{lemma}\label{lemfat} 
Let $\mu$ be a stationary measure. Then for all $n\geq 1$, $A\in\mathcal{A}^n$, positive integer $k$ and $B\in\mathcal{F}_{kf_A}^{\infty}$ the following inequalities hold 
\begin{itemize} 
\item[(a)] $\mu({T_A}>kf_A;B)\leq ((\psi(n)+1)\mu({T_A}>f_A-2n))^k\mu(B)$,
\item[(b)] $\mu({T_A}>kf_A;B)\leq \left(\mu\left({T_A}>f_A-2n\right)+\phi(n)\right)^{k}(\mu(B)+\phi(n))$,
\item[(c)] $\mu_A({T_A}>kf_A;B)\leq (\psi(n)+1)^k\mu({T_A}>f_A-2n)^{k-1}\mu(B)$.
\end{itemize}
\end{lemma}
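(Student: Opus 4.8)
The plan is to cut the time interval $[1,kf_A]$ into $k$ consecutive blocks of length $f_A$, to shorten each block by $2n$ so that consecutive blocks are separated by a gap of order $n$, and then to apply the mixing inequality once per block. Write $H:=\{T_A>f_A-2n\}$ and $H_j:=\sigma^{-jf_A}(H)$ for $0\le j\le k-1$. Since $H_j$ only forbids occurrences of $A$ at positions $jf_A+1,\dots,(j+1)f_A-2n$, all of which lie inside $[1,kf_A]$, the key inclusion
\[
\{T_A>kf_A\}\subseteq\bigcap_{j=0}^{k-1}H_j
\]
holds; likewise $A\cap\{T_A>kf_A\}\subseteq A\cap\bigcap_{j=1}^{k-1}H_j$, which I will use for the conditional statement. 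Because $A$ is an $n$-cylinder, $H_j\in\mathcal F_{jf_A+1}^{(j+1)f_A-n-1}$, so any two consecutive blocks (and also the block $H_{k-1}$ and the set $B\in\mathcal F_{kf_A}^{\infty}$, and the cylinder $A\in\mathcal F_0^{n-1}$ and $H_1$) are separated by at least $n$ free coordinates. As $\phi$ and $\psi$ are nonincreasing, every mixing step below may be bounded by $\phi(n)$, resp. $\psi(n)$.

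For item (a) I would build the past cumulatively, setting $P_j:=H_0\cap\dots\cap H_{j-1}$ and adding one block at a time. Treating $P_j\in\mathcal F_0^{jf_A-n-1}$ as the past and $H_j\in\mathcal F_{jf_A+1}^{\infty}$ as the future, $\psi$-mixing gives $\mu(P_j\cap H_j)\le(1+\psi(n))\mu(P_j)\mu(H_j)=(1+\psi(n))\mu(P_j)\mu(H)$ by stationarity. Iterating from $P_1=H_0$ up to $P_k$ uses $k-1$ mixing steps and yields $\mu(P_k)\le(1+\psi(n))^{k-1}\mu(H)^k$; a final $\psi$-mixing step peeling off $B$ adds one more factor and produces $\mu(\{T_A>kf_A\}\cap B)\le\mu(P_k\cap B)\le((1+\psi(n))\mu(H))^{k}\mu(B)$, which is (a). Item (b) is identical in structure but uses the one-sided bound $\mu(P_j\cap H_j)\le\mu(P_j)(\mu(H)+\phi(n))$ coming from the definition of $\phi$; the cumulative iteration then produces $\mu(P_k)\le(\mu(H)+\phi(n))^{k}$, and the last step with $B$ contributes the factor $(\mu(B)+\phi(n))$, giving (b).

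For item (c) I would keep the occurrence of $A$ at coordinates $0,\dots,n-1$ as the anchor of the past instead of the first block $H_0$, using $A\cap\{T_A>kf_A\}\subseteq A\cap H_1\cap\dots\cap H_{k-1}$. Starting from $Q_0:=A$ with $\mu(Q_0)=\mu(A)$ and adding the $k-1$ blocks $H_1,\dots,H_{k-1}$ and then $B$ costs $k$ applications of $\psi$-mixing, so $\mu(A\cap\{T_A>kf_A\}\cap B)\le(1+\psi(n))^{k}\mu(A)\mu(H)^{k-1}\mu(B)$; dividing by $\mu(A)$ gives the exponent $k-1$ on $\mu(H)$ claimed in (c). The part I expect to require the most care is the bookkeeping: I must verify that the gap between every pair of consecutive blocks (and between $H_{k-1}$ and $B$, and between $A$ and $H_1$) is at least $n$, so that each factor may legitimately be controlled by $\phi(n)$ or $\psi(n)$, and I must count the number of mixing applications precisely, since this is exactly what distinguishes the exponent $k$ in (a)/(b) from $k-1$ in (c). Degenerate cases in which $f_A-2n\le 0$ (so that $H$ is the full space) or in which $\psi(n)=\infty$ make the stated bounds trivially true and need no separate treatment.
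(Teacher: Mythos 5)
Your proof is correct and follows essentially the same strategy as the paper: cut $[1,kf_A]$ into $k$ blocks of length $f_A$, shorten each by $2n$ to create gaps of order $n$, and apply the mixing inequality once per block boundary plus once for $B$ (with the $A$-anchor replacing the first block in item (c), which accounts for the exponent $k-1$). The paper peels the blocks off recursively from the right via the identity $\{T_A>kf_A-2n\}=\{T_A>(k-1)f_A\}\cap\{T_A^{[(k-1)f_A]}>f_A-2n\}$ rather than accumulating them from the left, but this is only a cosmetic difference.
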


\begin{proof}

We start observing that $\{{T_A}>kf_A\}\subset\{{T_A}>kf_A-2n\}\in\mathcal{F}_0^{kf_A-n}$. Thus, applying the $\psi$-mixing property we get
\begin{align}
\label{lf1}\mu({T_A}>kf_A;B)\leq \mu({T_A}>kf_A-2n;B)\leq (\psi(n)+1)\mu(T_A>kf_A-2n)\mu(B)
\end{align}
Furthermore
\[
\{{T_A}>kf_A-2n\}=\left\lbrace{T_A}>(k-1)f_A;{T_A}^{[(k-1)f_A]}>f_A-2n\right\rbrace.
\]

Now one can take  in particular $B=\left\lbrace {T_A}^{[(k-1)f_A]}>f_A-2n\right\rbrace\in\mathcal F_{(k-1)f_A}^\infty$, and then  apply \eqref{lf1} with $k-1$ instead of $k$ to get
\begin{align*}
\mu({T_A}>kf_A-2n)& \leq  (\psi(n)+1)\mu\left({T_A}>(k-1)f_A-2n\right)\mu\left({T_A}^{[(k-1)f_A]}>f_A-2n\right)\nonumber\\
&=(\psi(n)+1)\mu\left({T_A}>(k-1)f_A-2n\right)\mu\left({T_A}>f_A-2n\right) . 
\end{align*}



The equality follows by stationarity. Iterating this argument one  concludes that
\begin{equation}\label{lf2}
\mu({T_A}>kf_A-2n)\leq (\psi(n)+1)^{k-1}\mu(T_A>f_A-2n)^k\;.
\end{equation}

Applying the resulting inequality in \eqref{lf1}, we get the statement (a). In a similar way,  $\phi$-mixing  gives
\begin{align*}
\mu(T_A>kf_A;B)
\leq \mu(T_A>kf_A-2n)(\mu(B)+\phi(n))
\end{align*}
And thus
\begin{align}
\mu({T_A}>kf_A-2n)   
&\leq \mu({T_A}>(k-1)f_A-2n)\left(\mu\left({T_A}^{[(k-1)f_A]}>f_A-2n\right)+\phi(n)\right)\nonumber\\
\label{lf3}&\leq \left(\mu\left({T_A}>f_A-2n\right)+\phi(n)\right)^{k}.
\end{align}
which ends the proof of (b).

The proof for (c) follows the same lines as item (a), observing that for $A,B\in\mathcal{F}_0^i$ and $C\in\mathcal{F}_{i+n}^{\infty}$, $\psi$-mixing property implies $\mu_A(B;C)\leq \mu_A(B)\mu(C)(\psi(n)+1)$.

\end{proof}


The next proposition is the key to the proof of Theorem \ref{theo:correct}, and the idea is the following. We work under the time scale $f_A$. When $t=k f_A, \ k\in\mathbb N$, then we simply cut out $t$ into $k$ pieces of equal size $f_A$. Then, the case of general $t=k f_A+r,r<f_A$ is approximated  by  its "integer part" $k f_A$. Technically, this is done in  $b)$ and $a)$ respectively. 

\begin{prop}\label{prfat}
Let $\mu$ be a $\phi$ or $\psi$-mixing measure. Then for all $n\geq n'$, $A\in\mathcal{A}^n$ and positive integer $k$, the following inequalities hold:\\
(a) For   $0\leq r\leq f_A$  
\begin{itemize}
\item[1.] $|\mu({T_A}>kf_A+r)-\mu({T_A}>kf_A)\mu({T_A}>r)|\leq C'(\psi(n)+1)^{k-1}\mu({T_A}>f_A-2n)^k\epsilon_\psi(A)$
\item[2.] $|\mu({T_A}>kf_A+r)-\mu({T_A}>kf_A)\mu({T_A}>r)|\leq C'(\mu({T_A}>f_A-2n)+\phi(n))^{k}\epsilon_\phi(A)$
\item[3.] $|\mu_A({T_A}>kf_A+r)-\mu_A({T_A}>kf_A)\mu({T_A}>r)|\leq C'((\psi(n)+1)\mu({T_A}>f_A-2n))^{k-1}\epsilon_\psi(A)$.
\end{itemize}
(b) For $k\ge1$
\begin{itemize}
\item[1.] $\left|\mu({T_A}>kf_A)-\mu({T_A}>f_A)^k\right|\leq C'\epsilon_{\psi}(A)(k-1)(\psi(n)+1)^{k-2}\mu({T_A}>f_A-2n)^{k-1}$
\item[2.] $\left|\mu({T_A}>kf_A)-\mu({T_A}>f_A)^k\right|\leq C'\epsilon_{\phi}(A)(k-1)(\mu({T_A}>f_A-2n)+\phi(n))^{k-1}$
\item[3.] $\left|\mu_A({T_A}>kf_A)-\mu_A(T_A>f_A)\mu({T_A}>f_A)^{k-1}\right|\leq C'\epsilon_{\psi}(A)(k-1)((\psi(n)+1)\mu({T_A}>f_A-2n))^{k-2}$
\end{itemize}
where $C'=2(M+1)$ for the cases involving $\psi$ and $C'=4$ for $\phi$.
\end{prop}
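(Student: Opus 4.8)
The plan is to establish part (a) first as a single-step decoupling at the cut point $kf_A$, and then to deduce part (b) from it by telescoping. For part (a) I would start from the exact identity $\mu(T_A>kf_A+r)=\mu(T_A>kf_A;\,T_A^{[kf_A]}>r)$, which holds because having no occurrence of $A$ before time $kf_A+r$ is the same as having none before $kf_A$ together with none for the shifted process in the next $r$ steps. The event $\{T_A>kf_A\}$ is then handled through the inclusion $\{T_A>kf_A\}\subseteq\{T_A>kf_A-2n\}\in\mathcal F_0^{kf_A-n}$, so that it is separated from the ``second half'' event $\{T_A^{[kf_A]}>r\}\in\mathcal F_{kf_A}^\infty$ by a gap of length $n$. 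Applying the mixing property of Definition \ref{def:mixing} across this gap factorizes the probability and produces the $\psi(n)$ (resp.\ $\phi(n)$) contribution of $\epsilon_\psi(A)$ defined in \eqref{epspsi} (resp.\ $\epsilon_\phi(A)$ in \eqref{epsphi}). The geometric prefactor $(\psi(n)+1)^{k-1}\mu(T_A>f_A-2n)^k$ is then precisely the bound \eqref{lf2} for the tail $\mu(T_A>kf_A-2n)$ extracted from the proof of Lemma \ref{lemfat}, so the whole error is this tail times a single per-junction error of order $\epsilon(A)$.

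The heart of the argument, and the step I expect to be the main obstacle, is to control the junction window in a \emph{two-sided} way, i.e.\ inside the absolute value. Passing from $\{T_A>kf_A\}$ to $\{T_A>kf_A-2n\}$ discards the returns that fall in the window $(kf_A-2n,kf_A]$, and in order to match the product $\mu(T_A>kf_A)\mu(T_A>r)$ in both directions these returns must be estimated rather than simply dropped. They are exactly the short returns governed by the second-order periodicity data $\tau(A)$, $n_A$ and $\mathcal R(A)$: I would bound their contribution using Proposition \ref{R(A)} (for $\mathcal R(A)$ and for the count $\mu_A(n\le T_A\le k)$) together with $\tau(A)\le 2n$ from Lemma \ref{lemtau}, which yields the remaining $n\,\mu(A^{(n_A-g_0)})$ part of $\epsilon_\psi(A)$; the $\phi$ case follows the same scheme, the infimum over $w$ in \eqref{epsphi} arising from the freedom in how far to shrink the event before invoking $\phi$. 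The delicate point is that the upper and lower estimates at the junction must be carried through simultaneously so that the final inequality bounds the modulus, and here the proof of \cite{abadi/vergne/2009} is the template to follow.

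For the conditional statements (item~3 of each part) I would run the same decoupling but starting from $\mu_A$, replacing the tail bound of Lemma \ref{lemfat}(a) by its conditional counterpart Lemma \ref{lemfat}(c) and using the conditional form of the mixing property, with Proposition \ref{PR} available to transfer a $\mu_A$-tail onto a $\mu$-tail wherever the conditioning and the factorization have to be reconciled; this accounts for the different shape $((\psi(n)+1)\mu(T_A>f_A-2n))^{k-1}$ of the prefactor in those items.

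Finally, part (b) follows from part (a) by a standard telescoping. Writing $a_m:=\mu(T_A>mf_A)$ with $a_0=1$, one has the identity $a_k-a_1^{\,k}=\sum_{j=0}^{k-1}a_1^{\,j}\bigl(a_{k-j}-a_{k-j-1}a_1\bigr)$, in which the $j=k-1$ term vanishes and each remaining bracket $a_m-a_{m-1}a_1$ is exactly the single decoupling of part (a) taken with $r=f_A$ and index $m-1$. Bounding each bracket by part (a), using $a_1=\mu(T_A>f_A)\le\mu(T_A>f_A-2n)$ and $\psi(n)+1\ge1$ to absorb the factors $a_1^{\,j}$ into the common product $\mu(T_A>f_A-2n)^{k-1}$, and counting the $k-1$ surviving terms yields the claimed factor $(k-1)$ together with the prefactor $(\psi(n)+1)^{k-2}\mu(T_A>f_A-2n)^{k-1}$. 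Items~2 and~3 of part (b) telescope identically from the corresponding items of part (a), with the $\phi$-bound of Lemma \ref{lemfat}(b) and the conditional bound of Lemma \ref{lemfat}(c) supplying the respective prefactors.
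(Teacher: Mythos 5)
Your architecture is the paper's: part (a) is a single decoupling at the cut point $kf_A$ using a gap of length $n$ created by passing to $\{T_A>kf_A-2n\}\in\mathcal F_0^{kf_A-n}$, with the geometric prefactor supplied by \eqref{lf2} (resp.\ \eqref{lf3}, resp.\ Lemma \ref{lemfat}(c)); part (b) follows by exactly your telescoping identity, which is the decomposition the paper writes as a sum over $j=2,\dots,k$, and your bookkeeping of the $k-1$ surviving brackets and the absorbed powers of $a_1$ is correct.

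The step you single out as ``the heart of the argument'' is, however, mis-diagnosed, and as written it would not go through. The junction windows that appear after the triangle inequality are the \emph{unconditional} quantities $\mu\bigl(T_A>kf_A;\,T_A^{[kf_A]}\le 2n\bigr)$ and $\mu(r-2n<T_A\le r)$. Proposition \ref{R(A)} bounds \emph{conditional} probabilities $\mu_A(\cdot)$ of returns constrained by the overlap structure of $A$ with itself, and the sets $\mathcal R(A)$, $n_A$ are meaningless for an occurrence of $A$ in a window not anchored at an occurrence of $A$ at time $0$; so invoking that machinery here is a category error. It is also unnecessary: the paper controls both windows by the crude union bound $\mu(T_A\le 2n)\le 2n\mu(A)$ together with $\mu(A)\le\mu\bigl(A^{(n_A-g_0)}\bigr)$ (resp.\ $\mu(A)\le\mu\bigl(A^{(w)}\bigr)$), which is already of order $\epsilon(A)$. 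The second-order periodicity data and Proposition \ref{R(A)} are the engine of Proposition \ref{PR} (comparing $\mu_A(T_A>t)$ with $\rho(A)\mu(T_A>t)$), not of this proposition; likewise Proposition \ref{PR} plays no role in item (a)-3, where the conditional $\psi$-mixing inequality $|\mu_A(B;C)-\mu_A(B)\mu(C)|\le\mu_A(B)\mu(C)\psi(n)$ and Lemma \ref{lemfat}(c) suffice. One further detail you gloss over: when $r<2n$ there is no room to insert the $2n$-gap inside the second block, and the paper treats this case separately by comparing $\mu(T_A>kf_A+r)$ with $\mu(T_A>kf_A)$ directly and paying $\mu(T_A\le r)\le 2n\mu(A)$; your argument needs this case split to be complete.
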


\begin{proof} 

We will proof items (a)-1 and (a)-2 together. Initially, consider the case in which $r<2n$. In this case, for all $n\geq n'$ we have
\begin{align}
&|\mu({T_A}>kf_A+r)-\mu({T_A}>kf_A)\mu({T_A}>r)|\nonumber\\
&\leq  \left|\mu\left({T_A}>kf_A,{T_A}^{[kf_A]}>r\right)-\mu({T_A}>kf_A)\right|+\mu({T_A}>kf_A)|1-\mu({T_A}>r)|\nonumber\\
&\label{prfat1}\leq  \mu\left({T_A}>kf_A,{T_A}^{[kf_A]}\leq r\right)+\mu({T_A}>kf_A)\mu({T_A}\leq r).
\end{align}

By Lemma \ref{lemfat}-(a) and \eqref{lf2}, the last sum is bounded by
\begin{align}
& ((\psi(n)+1)\mu(T_A>f_A-2n))^k\mu\left({T_A}^{[kf_A]}\leq r\right)+\mu({T_A}>kf_A-2n)r\mu(A)\nonumber\\
& \leq ((\psi(n)+1)\mu(T_A>f_A-2n))^k\mu\left({T_A}\leq r\right)+(\psi(n)+1)^{k-1}\mu(T_A>f_A-2n)^kr\mu(A)\nonumber\\
& \leq (\psi(n)+1)^{k-1}\mu(T_A>f_A-2n)^kr\mu(A)(M+1)\nonumber\\
& \label{pr29}\leq 2(M+1)(\psi(n)+1)^{k-1}\mu(T_A>f_A-2n)^k\epsilon_{\psi}(A)
\end{align}
which gives us (a)-1. To get (a)-2 for $r<2n$, we apply Lemma \ref{lemfat}-(b) and \eqref{lf3} in a similar way. Thus, \eqref{prfat1} is bounded by
\begin{align*}
&(\mu(T_A>f_A-2n)+\phi(n))^k(\mu(T_A\leq r)+\phi(n))+(\mu(T_A>f_A-2n)+\phi(n))^k\mu(T_A\leq r)\\
&\leq 4(\mu(T_A>f_A-2n)+\phi(n))^k\epsilon_{\phi}(A).
\end{align*}

We now consider the case $r\geq 2n$. The triangle inequality gives us
\begin{align}
& |\mu({T_A}>kf_A+r)-\mu({T_A}>kf_A)\mu({T_A}>r)|\nonumber \\
\label{pr23}&\leq  \left|\mu\left({T_A}>kf_A;{T_A}^{[kf_A]}>r\right)-\mu\left({T_A}>kf_A;{T_A}^{[kf_A+2n]}>r-2n\right)\right|\\
\label{pr24}&+\left|\mu\left({T_A}>kf_A;{T_A}^{[kf_A+2n]}>r-2n\right)-\mu\left({T_A}>kf_A\right)\mu\left({T_A}^{[kf_A+2n]}>r-2n\right)\right|\\
\label{pr25}&+\left|\mu\left({T_A}>kf_A\right)\mu\left({T_A}^{[kf_A+2n]}>r-2n\right)-\mu\left({T_A}>kf_A\right)\mu\left({T_A}>r\right)\right|.
\end{align}


We proceed as in \eqref{pr1} and use Lemma \ref{lemfat}-(a) to get
\begin{align}
\eqref{pr23}&\le\mu\left({T_A}>kf_A;{T_A}^{[kf_A]}\leq 2n\right)\nonumber\\
& \leq ((\psi(n)+1)(\mu(T_A>f_A-2n))^k\mu(T_A\leq 2n)\nonumber\\
& \leq 2n\mu(A)((\psi(n)+1)(\mu(T_A>f_A-2n))^k. \label{pr26}
\end{align}

For the case $\phi$, we apply Lemma \ref{lemfat}-(b) and get
\begin{align}
\eqref{pr23}&\le (\mu(T_A>f_A-2n)+\phi(n))^{k}(2n\mu(A)+\phi(n)).\label{pr22}
\end{align}

By $\psi$-mixing and \eqref{lf2}
\begin{align}
\eqref{pr24}&\leq\mu({T_A}>kf_A-2n)\psi(n)\nonumber\\
\label{pr27} &\leq (\psi(n)+1)^{k-1}\mu(T_A>f_A-2n)^k\psi(n)
\end{align} 

And applying $\phi$-mixing and \eqref{lf3}
\begin{align}
\label{pr21}\eqref{pr24}&\leq(\mu({T_A}>f_A-2n)+\phi(n))^k\phi(n)
\end{align} 

Finally, using shift-invariance and the same arguments as above
\begin{align}
 \eqref{pr25} 
&=\mu({T_A}>kf_A)\mu\left(r-2n<{T_A}\leq r\right)\nonumber\\
\label{pr28} &\leq  2n\mu(A)\mu({T_A}>kf_A-2n).
\end{align}

Therefore, \eqref{lf2}, \eqref{pr26},\eqref{pr27} and \eqref{pr28} give us
\begin{align*}
\eqref{pr23}+\eqref{pr24}+\eqref{pr25}
\leq 2(M+1)(\psi(n)+1)^{k-1}\mu(T_A>f_A-2n)^k\epsilon_{\psi}(A)
\end{align*}
and from \eqref{lf3}, \eqref{pr22}, \eqref{pr21} and \eqref{pr28} we get
\begin{align*}
\eqref{pr23}+\eqref{pr24}+\eqref{pr25}\leq 4(\mu(T_A>f_A-2n)+\phi(n))^k\epsilon_{\phi}(A)
\end{align*}
which ends the proof for (a)-1 and (a)-2. 

For the proof of the (a)-3, we write a similar triangle inequality as above:
\begin{align*}
& |\mu_A({T_A}>kf_A+r)-\mu_A({T_A}>kf_A)\mu({T_A}>r)|\nonumber \\
&\leq  \left|\mu_A\left({T_A}>kf_A;{T_A}^{[kf_A]}>r\right)-\mu_A\left({T_A}>kf_A;{T_A}^{[kf_A+2n]}>r-2n\right)\right|\\
&+\left|\mu_A\left({T_A}>kf_A;{T_A}^{[kf_A+2n]}>r-2n\right)-\mu_A\left({T_A}>kf_A\right)\mu\left({T_A}^{[kf_A+2n]}>r-2n\right)\right|\\
&+\mu_A\left({T_A}>kf_A\right)\left|\mu\left({T_A}^{[kf_A+2n]}>r-2n\right)-\mu\left({T_A}>r\right)\right|.
\end{align*}

Then, we follow the same as we did for (a)-1, but applying item (c) of Lemma \ref{lemfat} and using the $\psi$-mixing property:
\[
|\mu_A(B;C)-\mu_A(B)\mu(C)|\leq \mu_A(B)\mu(C)\psi(n)
\]
where $A,B\in\mathcal{F}_0^i$ and $C\in\mathcal{F}_{i+n}^{\infty}$. For the case $r<2n$, we use 
\begin{align*}
&|\mu_A({T_A}>kf_A+r)-\mu_A({T_A}>kf_A)\mu({T_A}>r)|\\
&\leq  \left|\mu_A\left({T_A}>kf_A,{T_A}^{[kf_A]}>r\right)-\mu_A({T_A}>kf_A)\right|+\mu_A({T_A}>kf_A)|1-\mu({T_A}>r)|
\end{align*}
and proceed as we did in \eqref{pr29}, applying again Lemma \ref{lemfat}-(c). This ends item (a). 

We now come to the proof of items (b)-1 and (b)-2.  For $k = 1$ we have an equality. For $k\geq 2$ we get
\begin{align}
& \left|\mu({T_A}>kf_A)-\mu({T_A}>f_A)^{k}\right|\nonumber\\
&=\left|\sum_{j=2}^k\left(\mu({T_A}>jf_A)-\mu({T_A}>(j-1)f_A)\mu({T_A}>f_A)\right)\mu({T_A}>f_A)^{k-j}\right|\nonumber\\
\label{pr35}&\leq \sum_{j=2}^k \left|\mu({T_A}>jf_A)-\mu({T_A}>(j-1)f_A)\mu({T_A}>f_A)\right|\mu({T_A}>f_A)^{k-j}.
\end{align}

We put $r=f_A$ in item (a)-1 to obtain (b)-1:
\begin{align*}
\eqref{pr35}&\leq2(M+1)\epsilon_{\psi}(A)\sum_{j=2}^k (\psi(n)+1)^{j-2}\mu({T_A}>f_A-2n)^{j-1}\mu({T_A}>f_A)^{k-j}\\
&\leq 2(M+1)\epsilon_{\psi}(A)(k-1)(\psi(n)+1)^{k-2}\mu({T_A}>f_A-2n)^{k-1}.
\end{align*}
 
Furthermore, we get the inequality (b)-2, under $\phi$-mixing, proceeding similarly as above
\begin{align*}
\eqref{pr35}&\leq 4\epsilon_{\phi}(A)\sum_{j=2}^k (\mu({T_A}>f_A-2n)+\phi(n))^{j-1}(\mu({T_A}>f_A-2n)+\phi(n)^{k-j}\\
&= 4\epsilon_{\phi}(A)(k-1)(\mu({T_A}>f_A-2n)+\phi(n))^{k-1}
\end{align*}

Finally, we proof (b)-3 applying (a)-3 as follows
\begin{align*}
& \left|\mu_A({T_A}>kf_A)-\mu_A({T_A}>f_A)\mu(T_A>f_A)^{k-1}\right|\\
&\leq\sum_{j=2}^k\left|\mu_A({T_A}>jf_A)-\mu_A({T_A}>(j-1)f_A)\mu({T_A}>f_A)\right|\mu({T_A}>f_A)^{k-j}\\
&\leq 2(M+1)\epsilon_{\psi}(A)\sum_{j=2}^{k}((\psi(n)+1)\mu(T_A>f_A-2n))^{j-2}\mu(T_A>f_A)^{k-j}\\
& \leq 2(M+1)\epsilon_{\psi}(A)(k-1)((\psi(n)+1)\mu(T_A>f_A-2n))^{k-2}.
\end{align*}


\end{proof}

The next two lemmas are classical results and are stated without proof. The first one establishes the reversibility of certain sets for stationary measures and the second one is a discrete version of the Mean Value Theorem which follows with a straightforward computation.

\begin{lemma}\label{Est}
Let $\mu$ be shift-invariant. For all positive $i\in\mathbb{N}$, $n\geq 1$ and $A\in\mathcal{A}^n$ we have
$$\mu({T_A}=i)=\mu({T_A}>i-1;A)$$
\end{lemma}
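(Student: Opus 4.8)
The plan is to express both sides through the nested tail events $\{T_A>j\}$ and to let shift-invariance enter through a single application of $\sigma$. First I would rewrite the left-hand side using the inclusion $\{T_A>i\}\subset\{T_A>i-1\}$ together with the identity $\{T_A=i\}=\{T_A>i-1\}\setminus\{T_A>i\}$, valid because $T_A=i$ means exactly that no visit to $A$ occurs up to time $i-1$ and a visit occurs at time $i$. This gives the purely set-theoretic equality $\mu(T_A=i)=\mu(T_A>i-1)-\mu(T_A>i)$.

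Next I would decompose the event $\{T_A>i-1\}$ according to whether the starting point lies in $A$ or in $A^c$. Writing $\{T_A>j\}=\bigcap_{k=1}^{j}\sigma^{-k}(A^c)$, I would split $\mu(T_A>i-1)=\mu(T_A>i-1;A)+\mu(A^c\cap\{T_A>i-1\})$. The first summand is precisely the right-hand side of the lemma, so the whole matter reduces to identifying the second summand with $\mu(T_A>i)$.

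The key step, and the only one invoking stationarity, is the observation that $A^c\cap\{T_A>i-1\}=\bigcap_{k=0}^{i-1}\sigma^{-k}(A^c)$, and that applying $\sigma^{-1}$ carries this intersection onto $\bigcap_{k=1}^{i}\sigma^{-k}(A^c)=\{T_A>i\}$. Shift-invariance of $\mu$ then yields $\mu(A^c\cap\{T_A>i-1\})=\mu(T_A>i)$; substituting back gives $\mu(T_A>i-1)=\mu(T_A>i-1;A)+\mu(T_A>i)$, and comparison with the first step produces the claim. I do not expect a genuine obstacle: the argument is essentially bookkeeping, the only delicate point being the index shift in the intersection (and the degenerate case $i=1$, where $\{T_A>0\}$ is the whole space and the statement collapses to the stationarity identity $\mu(\sigma^{-1}A)=\mu(A)$).
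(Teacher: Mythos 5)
Your proof is correct and complete: the telescoping identity $\mu(T_A=i)=\mu(T_A>i-1)-\mu(T_A>i)$, the decomposition of $\{T_A>i-1\}$ over $A$ and $A^c$, and the single application of shift-invariance via $\sigma^{-1}\bigl(\bigcap_{k=0}^{i-1}\sigma^{-k}(A^c)\bigr)=\{T_A>i\}$ are exactly the standard argument. The paper states this lemma as a classical result without proof, so there is nothing to compare against; your write-up (including the $i=1$ degenerate case) is the proof the authors are implicitly invoking.
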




\begin{lemma}\label{produt}
Given $a_1,...,a_n,b_1,...,b_n$ real numbers such that $0\leq a_i,b_i\leq 1$, the following inequality holds
\begin{align*}
\left|\prod_{i=1}^n a_i-\prod_{i=1}^n b_i\right| \leq \sum_{i=1}^n\left|a_i-b_i\right|\left(\max_{1\leq i\leq n}\{a_i,b_i\}\right)^{n-1}\leq \sum_{i=1}^n\left|a_i-b_i\right|.
\end{align*}
\end{lemma}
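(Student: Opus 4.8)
The plan is to use a standard telescoping argument in which the factors $b_i$ are replaced by $a_i$ one at a time. For $0 \leq k \leq n$, I would define the hybrid products
\[
P_k := \left(\prod_{i=1}^{k} a_i\right)\left(\prod_{i=k+1}^{n} b_i\right),
\]
with the usual convention that an empty product equals $1$, so that $P_0 = \prod_{i=1}^n b_i$ and $P_n = \prod_{i=1}^n a_i$. The quantity to be bounded is then the telescoping sum $P_n - P_0 = \sum_{k=1}^n (P_k - P_{k-1})$.

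Next I would compute each increment. Since $P_k$ and $P_{k-1}$ differ only in their $k$-th factor ($a_k$ versus $b_k$), one has
\[
P_k - P_{k-1} = \left(\prod_{i=1}^{k-1} a_i\right)(a_k - b_k)\left(\prod_{i=k+1}^{n} b_i\right).
\]
Applying the triangle inequality to the telescoping sum and passing the absolute value through each factor gives
\[
\left|\prod_{i=1}^n a_i - \prod_{i=1}^n b_i\right| \leq \sum_{k=1}^n |a_k - b_k| \left(\prod_{i=1}^{k-1} a_i\right)\left(\prod_{i=k+1}^{n} b_i\right).
\]

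The only point requiring a little care is the bookkeeping: in the $k$-th summand the prefactor consists of exactly $(k-1) + (n-k) = n-1$ numbers, each lying in $[0,1]$ and hence each at most $M := \max_{1\leq i\leq n}\{a_i,b_i\}$. Bounding the product of these $n-1$ factors by $M^{n-1}$ yields the first claimed inequality, and since $M \leq 1$ forces $M^{n-1} \leq 1$, the second inequality is then immediate. I do not expect any genuine obstacle here; the argument is purely algebraic, and the only thing to watch is the empty-product convention at the endpoints $k=1$ and $k=n$, where one of the two prefactors degenerates to $1$.
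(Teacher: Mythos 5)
Your telescoping argument is correct: the hybrid products $P_k$ are well defined, the increment identity $P_k-P_{k-1}=\bigl(\prod_{i=1}^{k-1}a_i\bigr)(a_k-b_k)\bigl(\prod_{i=k+1}^{n}b_i\bigr)$ is exact, and the $n-1$ remaining factors in each summand are nonnegative and bounded by $\max_{1\leq i\leq n}\{a_i,b_i\}\leq 1$, which gives both claimed inequalities. The paper states this lemma without proof as a classical fact following from ``a straightforward computation,'' and your proof is precisely the standard argument that is being alluded to; there is nothing to add or correct.
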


\subsection{Proof of Theorem \ref{theo:correct}}\label{subsectheo1}

Theorem \ref{theo:correct} contains 8 statements, each statement corresponding to a choice of 
\begin{itemize}
\item recurrence time: hitting or return,
\item mixing property: $\psi$ or $\phi$,
\item amplitude of $t$:  smaller or larger than $f_A$.\\
\end{itemize}

{Recall the definition of $n'$ in \eqref{defn'}. The proof of Theorem \ref{theo:correct} holds for all $n\geq n_0$, where $n_0$ is explicitly given by
\begin{align}
\label{defn0} n_0:=\inf\left\lbrace m\geq n'; \sup_{A\in\mathcal{A}^n} \mu(A)\tau(A) < 1/2,\;\forall n\geq m\right\rbrace
\end{align}
which is finite since $\sup_{A\in \mathcal A^n} \mu(A)\tau(A)\stackrel{n}\longrightarrow 0$. Then, in particular, we have $\tau(A)<f_A$ for all $n\geq n_0$ and $A\in\mathcal{A}^n$. 
}
 

\subsubsection{Proofs of the statements for small $t$'s} 

Here we assume that $1\leq t\leq f_A:=[2\mu(A)]^{-1}$.

\begin{proof}[Proof of hitting time, $\phi$ and $\psi$ together]
Recall that $\epsilon(A)$ denotes $\epsilon_{\phi}(A)$ or $\epsilon_{\psi}(A)$, depending on whether the measure is $\phi$ or $\psi$-mixing. For positive $i\in\mathbb{N}$, define 
\[
p_i=\frac{\mu_A({T_A}>i-1)}{\mu({T_A}>i-1)}.
\]
Then
\begin{align}
\mu({T_A}>t)&=\prod_{i=1}^t \frac{\mu({T_A}>i)}{\mu({T_A}>i-1)}= \prod_{i=1}^t \left(1-\mu({T_A}=i|{T_A}>i-1)\right)\nonumber\\
&\label{teo4}=\prod_{i=1}^t \left(1-\mu(\sigma^{-i}(A)|{T_A}>i-1)\right)=\prod_{i=1}^t \left(1-\mu(A)p_i\right)
\end{align}
where we used Lemma \ref{Est} in the last equality.

Similarly, for $\tau(A)\leq t\leq f_A$, we have
\begin{align}
\label{teo15} \mu({T_A}>t)=\mu({T_A}>\tau(A))\prod_{i=\tau(A)+1}^t \left(1-\mu(A)p_i\right).
\end{align}

We apply \eqref{teo4} and Lemma \ref{produt} to obtain
\begin{align}
&\quad\left|\mu({T_A}>t)-e^{-\rho(A)\mu(A)t}\right| \nonumber\\
&=\left|\prod_{i=1}^t \left(1-\mu(A)p_i\right)-\prod_{i=1}^t e^{-\rho(A)\mu(A)}\right|\nonumber\\
&\leq \left|\prod_{i=1}^{\tau(A)}\left(1-\mu(A)p_i\right)-\prod_{i=1}^{\tau(A)} e^{-\rho(A)\mu(A)}\right|+\left|\prod_{i=\tau(A)+1}^t \left(1-\mu(A)p_i\right)-\prod_{i=\tau(A)+1}^t e^{-\rho(A)\mu(A)}\right|\nonumber\\
\label{teo2}&\leq  \left|\mu({T_A}>\tau(A))-e^{-\rho(A)\mu(A)\tau(A)}\right|+\sum_{i=\tau(A)+1}^t \left|1-\mu(A)p_i-e^{-\rho(A)\mu(A)}\right|.
\end{align}

Applying the inequality $\left|1-e^{-x}\right|\leq x$ for $x\geq 0$, we have
\begin{align}
\left|\mu({T_A}>\tau(A))-e^{-\rho(A)\mu(A)\tau(A)}\right|&\leq \left|\mu({T_A}>\tau(A))-1\right|+\left|1-e^{-\rho(A)\mu(A)\tau(A)}\right|\nonumber\\
&\label{teo16}\leq 2\tau(A)\mu(A).
\end{align}

On the other hand, by the triangle inequality
\begin{align}
\label{teo1}\left|1-p_i\mu(A)-e^{-\rho(A)\mu(A)}\right|&\leq \left|p_i-\rho(A)\right|\mu(A)+\left|1-\rho(A)\mu(A)-e^{-\rho(A)\mu(A)}\right|.
\end{align}

Since $|1-x-e^{-x}|\leq \frac{x^2}{2}$ for all $0\leq x\leq 1$, by doing $x=\rho(A)\mu(A)$ we get
\begin{align*}
\left|1-\rho(A)\mu(A)-e^{-\rho(A)\mu(A)}\right|\leq \frac{\rho(A)^2\mu(A)^2}{2}\leq \frac{\epsilon(A)\mu(A)}{2}.
\end{align*}

Furthermore, for $\tau(A)+1\leq i\leq f_A+1$, Proposition \ref{PR} gives us
\begin{align*}
|p_i-\rho(A)|=\left|\frac{\mu_A({T_A}>i-1)}{\mu({T_A}>i-1)}-\rho(A)\right|\leq \frac{C\epsilon(A)}{\mu({T_A}>i-1)}\leq 2C\epsilon(A),
\end{align*}
where, for the last inequality we used
\begin{align*}
\mu({T_A}>i-1)=1-\mu({T_A}\leq i-1)\geq 1-(i-1)\mu(A)\geq 1-f_A\mu(A)=\frac{1}{2}.
\end{align*}


Thus, applying \eqref{teo1} we obtain for $\tau(A)+1\leq i\leq f_A+1$
\begin{align}
\left|1-p_i\mu(A)-e^{-\rho(A)\mu(A)}\right|
\label{teo5}\leq\left(2C+1/2\right)\epsilon(A)\mu(A).
\end{align}

Therefore, \eqref{teo2}, \eqref{teo16} and \eqref{teo5} give us
\begin{align}
 \left|\mu({T_A}>t)-e^{-\rho(A)\mu(A)t}\right|&\leq 2\tau(A)\mu(A)+\left(2C+1/2\right)(t-\tau(A))\epsilon(A)\mu(A)\nonumber\\
\label{teo10}&\le \left(2C+1/2\right)[\tau(A)\mu(A)+t\mu(A)\epsilon(A)]
\end{align}
which concludes the statement of Theorem \ref{theo:correct} for hitting time at small $t$'s (with either $\phi$ or $\psi$).\\
\end{proof}


\begin{proof}[Proof for return time, $\phi$ and $\psi$ together]
By definition we have $\mu_A(T_A>t)=p_{t+1}\mu(T_A>t)$. Then, we use again the triangle inequality to write
\begin{align}
&\quad\left|\mu_A({T_A}>t)-\rho(A)e^{-\rho(A)\mu(A)(t-\tau(A))}\right|\nonumber\\
\label{teo3} &\leq  \mu({T_A}>t)\left|p_{t+1}-\rho(A)\right|+\rho(A)\left|\mu({T_A}>t)-e^{-\rho(A)\mu(A)(t-\tau(A))}\right|.
\end{align}

As we saw before, the first modulus above is bounded by $2C\epsilon(A)$. On the other hand, applying \eqref{teo15} we can write
\begin{align*}
\left|\mu({T_A}>t)-e^{-\rho(A)\mu(A)(t-\tau(A))}\right|=\left|\mu({T_A}>\tau(A))\prod_{i=\tau(A)+1}^t(1-\mu(A)p_i)-\prod_{i=\tau(A)+1}^t e^{-\rho(A)\mu(A)}\right|. 
\end{align*}
This is bounded, applying Lemma \ref{produt}, by
\begin{align*}
&\quad\; |\mu({T_A}>\tau(A))-1|+\left|\prod_{i=\tau(A)+1}^t(1-\mu(A)p_i)-\prod_{i=\tau(A)+1}^t e^{-\rho(A)\mu(A)}\right|\nonumber\\
&\leq \tau(A)\mu(A)+\left(2C+1/2\right)t\mu(A)\epsilon(A)
\end{align*}
where the last inequality follows from \eqref{teo2} and \eqref{teo5}. Finally, notice that $t\mu(A)\leq f_A\mu(A)=1/2$ and $\tau(A)\mu(A)\leq 2\epsilon(A)$ (use Lemma \ref{lemtau} for $\psi$). Therefore, we obtain from \eqref{teo3}
\begin{align}
\label{teo19}\left|\mu_A({T_A}>t)-\rho(A)e^{-\rho(A)\mu(A)(t-\tau(A))}\right|\leq \left(3C+9/4\right)\epsilon(A)
\end{align}
This concludes the statement of Theorem \ref{theo:correct} for return time at small $t$'s (with either $\phi$ or $\psi$).
 \end{proof}

\subsubsection{Proof of the statements for large $t$'s}

The proof for return time for $t>f_A$ is done in \cite{abadi/vergne/2009} under $\phi$-mixing, finite alphabet and complete grammar. 
The proof still holds if one just assume  countable alphabet and incomplete grammar (recall Remark \ref{rem.error} for the uniform convergence to zero of the error term $\epsilon_{\phi}$). 
 Thus, we focus on  hitting time under each mixing assumption, and return time only under $\psi$-mixing.



\begin{proof}[Proof of Theorem \ref{theo:correct} for hitting times, for $t>f_A$] Write $t=kf_A+r$ with integer $k\ge1$ and $0\leq r<f_A$. Thus, we have
\begin{align}
\left|\mu({T_A}>t)-e^{-\rho(A)\mu(A)t}\right|&\leq  \left|\mu({T_A}>kf_A+r)-\mu({T_A}>kf_A)\mu({T_A}>r)\right|\label{teo6}\\
\label{teo7}&\quad+ \left|\mu({T_A}>kf_A)-\mu({T_A}>f_A)^{k}\right|\mu({T_A}>r)\\
\label{teo8}&\quad+ \left|\mu({T_A}>f_A)^{k}-e^{-\rho(A)\frac{k}{2}}\right|\mu({T_A}>r)\\
\label{teo9}&\quad+ \left|e^{-\rho(A)\frac{k}{2}}\mu({T_A}>r)-e^{-\rho(A)\mu(A)t}\right|.
\end{align}

In order to get an upper bound for the sum of (\ref{teo6}) and (\ref{teo7}), we  analyse the  $\psi$ and $\phi$ cases separately, and start by the $\psi$-mixing.  Applying items (a)-1 and (b)-1 of Proposition \ref{prfat}, that sum is bounded by
\begin{align}
&\leq C'\epsilon_{\psi}(A)(\psi(n)+1)^{k-1}\mu({T_A}>f_A-2n)^{k}\left(1+(k-1)((\psi(n)+1)\mu({T_A}>f_A-2n))^{-1}\right)\nonumber\\
&\leq 2(M+1)\epsilon_{\psi}(A)\left((\psi(n)+1)\mu({T_A}>f_A-2n)\right)^{k}2k\nonumber\\
\label{teo12}&\leq 8(M+1)\epsilon_{\psi}(A)\mu(A)t\left((\psi(n)+1)\mu({T_A}>f_A-2n)\right)^{k}.
\end{align}
where the last two inequalities are justified by $\mu({T_A}>f_A-2n)^{-1}\leq \mu({T_A}>f_A)^{-1}\leq 2$ and $k\leq 2\mu(A)t$.


On the other hand, applying (\ref{teo10}) with $t=f_A-2n$ we get 
\begin{align*} 
\left|\mu({T_A}>f_A-2n)-e^{-\rho(A)\mu(A)(f_A-2n)}\right|&=\left|\mu({T_A}>f_A-2n)-e^{-\frac{\rho(A)}{2}+2n\rho(A)\mu(A)}\right|\nonumber\\
&\leq  \left(2C+1/2\right)(\tau(A)\mu(A)+(f_A-2n)\mu(A)\epsilon(A))\nonumber\\
&\leq \left(5C+5/4\right)\epsilon(A)
\end{align*}
where we use $\tau(A)\mu(A)\leq 2\epsilon(A)$. 

Furthermore, by the Mean Value Theorem (MVT)
\begin{align*}
\left|e^{-\frac{\rho(A)}{2}+2n\rho(A)\mu(A)}-e^{-\frac{\rho(A)}{2}}\right|&\leq 2n\rho(A)\mu(A)e^{-\frac{\rho(A)}{2}+2n\rho(A)\mu(A)}\nonumber\\
&\leq  2n\mu(A)e^{2n\mu(A)}\leq \frac{11}{2} n\mu(A)
\end{align*}
since for $n\geq n_0$ we have $2n\mu(A)\leq 2\sup \mu(A)\tau(A)\leq 1$.

Thus, it follows that
\begin{align*}
&\quad \left|(\psi(n)+1)\mu({T_A}>f_A-2n)-e^{-\frac{\rho(A)}{2}}\right|\\
&\leq  \psi(n)+\left|\mu({T_A}>f_A-2n)-e^{-\frac{\rho(A)}{2}+2n\rho(A)\mu(A)}\right|+\left|e^{-\frac{\rho(A)}{2}+2n\rho(A)\mu(A)}-e^{-\frac{\rho(A)}{2}}\right|\\
&\leq \left(5C+27/4\right)\epsilon(A).
\end{align*}
Therefore
\[
\left((\psi(n)+1)\mu({T_A}>f_A-2n)\right)^k\leq \left(e^{-\frac{\rho(A)}{2}}+\left(5C+27/4\right)\epsilon(A)\right)^k.
\]
Since $e^x-1\geq x\;\forall x\in\mathbb{R}$, by doing $K=\left(5C+27/4\right)e^{1/2}$ we get
\begin{align}
&\left(e^{K\epsilon(A)}-1\right)\geq K\epsilon(A)\geq \left(5C+27/4\right)\epsilon(A) e^{\frac{\rho(A)}{2}}\nonumber\\
\Longrightarrow & \;e^{-\frac{\rho(A)}{2}}\left(e^{K\epsilon(A)}-1\right)\geq \left(5C+27/4\right)\epsilon(A)\nonumber\\
\label{teo11}\Longrightarrow &  \;e^{-\frac{\rho(A)}{2}+K\epsilon(A)}\geq \left(5C+27/4\right)\epsilon(A)+e^{-\frac{\rho(A)}{2}}.
\end{align}
Now, using that $k=2\mu(A)(t-r)$, we have
\begin{align}
\left((\psi(n)+1)\mu({T_A}>f_A-2n)\right)^{k} & \leq \left(e^{-\frac{\rho(A)}{2}+K\epsilon(A)}\right)^{k}\nonumber\\
& =e^{-\rho(A)\mu(A)t+\rho(A)\mu(A)r+2K\epsilon(A)\mu(A)t-2K\epsilon(A)\mu(A)r}\nonumber\\
& \leq e^{-\mu(A)t\left(\rho(A)-2K\epsilon(A)\right)}e^{\mu(A)r}\nonumber\\
\label{teo21} &\leq e^{1/2}e^{-\mu(A)t\left(\rho(A)-C_3\epsilon(A)\right)}
\end{align}
where the last inequality follows from $e^{\mu(A)r}\leq e^{\mu(A)f_A}$.

Therefore, it follows from (\ref{teo12}) that the sum of (\ref{teo6}) and (\ref{teo7}) is bounded by
\begin{align*}
14(M+1)\epsilon_{\psi}(A)\mu(A)te^{-\mu(A)t\left(\rho(A)-C_3\epsilon(A)\right)}.
\end{align*}
 

We now turn to the case of $\phi$-mixing. We apply items (a)-2 and (b)-2 of Proposition \ref{prfat} to get an upper bound for the sum of (\ref{teo6}) and (\ref{teo7}):
\begin{align*}
&\left|\mu({T_A}>kf_A+r)-\mu({T_A}>kf_A)\mu({T_A}>r)\right|+\left|\mu({T_A}>kf_A)-\mu({T_A}>f_A)^k\right|\mu({T_A}>r)\nonumber\\
&\leq 4\epsilon_{\phi}(A)\left(\mu({T_A}>f_A-2n)+\phi(n)\right)^{k}\left(1+(k-1)\left(\mu({T_A}>f_A-2n)+\phi(n)\right)^{-1}\right)\nonumber\\
&\leq 4\epsilon_{\phi}(A)\left(\mu({T_A}>f_A-2n)+\phi(n)\right)^{k}2k\nonumber\\
&\leq 16\epsilon_{\phi}(A)\mu(A)t\left(\mu({T_A}>f_A-2n)+\phi(n)\right)^{k}.
\end{align*}


Similarly to $\psi$-mixing case, one obtain
\begin{align*}
\left(\mu({T_A}>f_A-2n)+\phi(n)\right)^{k}\leq e^{1/2}e^{-\mu(A)t\left(\rho(A)-C_3\epsilon(A)\right)}
\end{align*}
which implies in the $\phi$-mixing case that the sum of (\ref{teo6}) and (\ref{teo7}) is bounded by
$$27\epsilon_{\phi}(A)\mu(A)te^{-\mu(A)t\left(\rho(A)-C_3\epsilon(A)\right)}.$$


Now, we will treat the cases $\psi$ and $\phi$ together to obtain upper bounds for (\ref{teo8}) and (\ref{teo9}). In order to get an upper bound for (\ref{teo8}), we apply (\ref{teo10}) with $t=f_A$:
\begin{align}
\left|\mu({T_A}>f_A)-e^{-\rho(A)\mu(A)f_A}\right|&=\left|\mu({T_A}>f_A)-e^{-\frac{\rho(A)}{2}}\right|\nonumber\\
&\leq  \left(2C+1/2\right)\left(\tau(A)\mu(A)+f_A\mu(A)\epsilon(A)\right)\nonumber\\
\label{teo22}&\leq  \left(5C+5/4\right)\epsilon(A).
\end{align}

Thus, applying Lemma \ref{produt} we have
\begin{align*}
&\quad\left|\mu({T_A}>f_A)^{k}-e^{-\rho(A)\frac{k}{2}}\right|\\
&\leq  \sum_{i=1}^k\left|\mu({T_A}>f_A)-e^{-\frac{\rho(A)}{2}}\right|\left(\max\left\lbrace\mu({T_A}>f_A),e^{-\frac{\rho(A)}{2}}\right\rbrace\right)^{k-1}.\nonumber
\end{align*}

The max is bounded using \eqref{teo22} by
%
\begin{align*}
e^{-\frac{\rho(A)}{2}}+\left(5C+5/4\right)\epsilon(A).
\end{align*}
Naturally, the absolute value is also bounded using \eqref{teo22} and we get that the above sum is bounded above by
\begin{align}
\label{teo20} k\,\left(5C+5/4\right)\epsilon(A)\,\left(e^{-\frac{\rho(A)}{2}}+\left(5C+5/4\right)\epsilon(A)\right)^{k-1}.
\end{align}
Recalling that $k=2\mu(A)(t-r)$ and proceeding as we did for \eqref{teo11} and \eqref{teo21}, we get the following upper bound for  \eqref{teo8}
\[
2\left(5C+5/4\right)\epsilon(A)\mu(A)t\,e^{-\mu(A)t\left(\rho(A)-C_3\epsilon(A)\right)}e^{1}\leq
7 \left(4C+1\right)\epsilon(A)\mu(A)te^{-\mu(A)t\left(\rho(A)-C_3\epsilon(A)\right)}.
\]

To conclude the proof for hitting time, we apply (\ref{teo10}) with $t=r$ to bound (\ref{teo9}) as follows
\begin{align*}
\left|e^{-\rho(A)\frac{k}{2}}\mu({T_A}>r)-e^{-\rho(A)\mu(A)t}\right|&
=  e^{-\rho(A)\mu(A)t+\rho(A)\mu(A)r}\left|\mu({T_A}>r)-e^{-\rho(A)\mu(A)r}\right|\nonumber\\
&\leq  \left(2C+1/2\right) e^{-\rho(A)\mu(A)t+\mu(A)f_A}\left(\tau(A)\mu(A)+r\mu(A)\epsilon(A)\right)\nonumber\\
&\leq  \left(2C+1/2\right)\left(\tau(A)\mu(A)+f_A\mu(A)\epsilon(A)\right)e^{-\rho(A)\mu(A)t}e^{1/2}\nonumber\\
&\leq  (17C+5)\epsilon(A)\mu(A)te^{-\mu(A)t\left(\rho(A)-C_3\epsilon(A)\right)}
\end{align*}
where the term $\mu(A)t$ follows from $1= 2\mu(A)f_A\leq 2\mu(A)t$. 
\end{proof}

\begin{proof}[Proof of Theorem \ref{theo:correct} for return time, for $t>f_A$ and under $\psi$-mixing] We use again the triangle inequality to write
\begin{align}
&\left|\mu_A({T_A}>t)-\rho(A)e^{-\rho(A)\mu(A)(t-\tau(A))}\right|\nonumber\\
\label{teo13}&\leq \left|\mu_A({T_A}>kf_A+r)-\mu_A({T_A}>kf_A)\mu({T_A}>r)\right|\\
\label{teo14}&+ \left|\mu_A({T_A}>kf_A)-\mu_A(T_A>f_A)\mu({T_A}>f_A)^{k-1}\right|\mu({T_A}>r)\\
\label{teo17}&+ \left|\mu_A(T_A>f_A)\mu({T_A}>f_A)^{k-1}-\rho(A)e^{-\rho(A)\frac{k}{2}}\right|\mu({T_A}>r)\\
\label{teo18}&+ \rho(A)e^{-\rho(A)\frac{k}{2}}\left|\mu({T_A}>r)-e^{-\rho(A)\mu(A)(r-\tau(A))}\right|.
\end{align}
 
Applying items (a)-3 and (b)-3 of Proposition \ref{prfat}, the sum of \eqref{teo13} and \eqref{teo14} is bounded by
\begin{align*}
& 2(M+1)\epsilon_{\psi}(A)((\psi(n)+1)\mu(T_A>f_A-2n))^{k-1}(1+(k-1)((\psi(n)+1)\mu(T_A>f_A-2n))^{-1})\\
& \leq 2(M+1)\epsilon_{\psi}(A)((\psi(n)+1)\mu(T_A>f_A-2n))^{k-1}2k\\
& \leq 8(M+1)\epsilon_{\psi}(A)\mu(A)t((\psi(n)+1)\mu(T_A>f_A-2n))^{k-1}.
\end{align*}

Replacing $k$ by $k-1$ in \eqref{teo21}, the last term is bounded above by
\[
8(M+1)\epsilon_{\psi}(A)\mu(A)t\,e^{1}\,e^{-\mu(A)t\left(\rho(A)-C_3\epsilon(A)\right)}\leq 22(M+1)\epsilon_{\psi}(A)\mu(A)t\,e^{-\mu(A)t\left(\rho(A)-C_3\epsilon(A)\right)}.
\]

On the other hand, Lemma \ref{produt} gives us
\begin{align*}
\eqref{teo17} & \leq \left(\max\left\lbrace\mu_A(T_A>f_A),\mu(T_A>f_A),e^{-\rho(A)/2}\right\rbrace\right)^{k-1}\left(\left|\mu_A(T_A>f_A)-\rho(A)e^{-\rho(A)/2}\right|\right.\\
&\left.+\sum_{i=1}^{k-1}\left|\mu(T_A>f_A)-e^{-\rho(A)/2}\right|\right)
\end{align*}

The last sum is bounded by $(5C+5/4)(k-1)\epsilon(A)$ using \eqref{teo22}. On the other hand, applying \eqref{teo19} with $t=f_A$ and the MVT we obtain
\begin{align*}
& \left|\mu_A(T_A>f_A)-\rho(A)e^{-\rho(A)/2}\right|\\
& \leq \left|\mu_A(T_A>f_A)-\rho(A)e^{-\rho(A)/2+\rho(A)\mu(A)\tau(A)}\right|+\rho(A)\left|e^{-\rho(A)/2+\rho(A)\mu(A)\tau(A)}-e^{-\rho(A)/2}\right|\\
& \leq (3C+9/4)\epsilon(A)+\rho(A)\mu(A)\tau(A)e^{-\rho(A)(1/2-\mu(A)\tau(A))}\\
& \leq (3C+17/4)\epsilon(A)
\end{align*}
since $e^{-\rho(A)(1/2-\mu(A)\tau(A))}\leq 1$ and $\mu(A)\tau(A)\leq 2\epsilon_{\psi}(A)$ for $n\geq n_0$.

Furthermore, the last inequality implies
\[
\mu_A(T_A>f_A)\leq \rho(A)e^{-\rho(A)/2}+(3C+17/4)\epsilon(A)\leq e^{-\rho(A)/2}+(5C+5/4)\epsilon(A)
\]
and by \eqref{teo22} we get
\[
\max\left\lbrace\mu_A(T_A>f_A),\mu(T_A>f_A),e^{-\rho(A)/2}\right\rbrace\leq e^{-\rho(A)/2}+(5C+5/4)\epsilon(A).
\]

Therefore, as we saw in \eqref{teo20}, we have
\begin{align*}
\eqref{teo17}& \leq (5C+5/4)\epsilon(A)k\left(e^{-\rho(A)/2}+(5C+5/4)\epsilon(A)\right)^{k-1}\\
& \leq 2(5C+5/4)\epsilon(A)\mu(A)t\,e^{1}\,e^{-\mu(A)t\left(\rho(A)-C_3\epsilon(A)\right)}\\
&\leq (109M+116)\epsilon(A)\mu(A)te^{-\mu(A)t\left(\rho(A)-C_3\epsilon(A)\right)}.
\end{align*}

Finally, by doing $t=r$ in \eqref{teo10} and applying the MVT once again, we get
\begin{align*}
& \left|\mu({T_A}>r)-e^{-\rho(A)\mu(A)(r-\tau(A))}\right|\\
&\leq \left|\mu(\tau_A>r)-e^{-\rho(A)\mu(A)r}\right|+\left|e^{-\rho(A)\mu(A)r}-e^{-\rho(A)\mu(A)(r-\tau(A))}\right|\\
&\leq (2C+1/2)(\tau(A)\mu(A)+r\mu(A)\epsilon(A))+\rho(A)\mu(A)\tau(A)e^{-\rho(A)\mu(A)(r-\tau(A))}\\
&\leq (2C+1/2)(2\epsilon(A)+f_A\mu(A)\epsilon(A))+(7/2)\epsilon(A)\\
& \leq (5C+19/4)\epsilon(A).
\end{align*}

We justify the third inequality in two cases. If $r>\tau(A)$, then $e^{-\rho(A)\mu(A)(r-\tau(A))}\leq 1$. Otherwise, if $r\leq \tau(A)$, then $e^{-\rho(A)\mu(A)(r-\tau(A))}\leq e^{\rho(A)\mu(A)\tau(A)}\leq e^{1/2}$, since $n\geq n_0$. Now just note that $\rho(A)\mu(A)\tau(A)\leq 2\epsilon(A)$.

Therefore, we finish the proof obtaining the following upper bound:
\begin{align*}
\eqref{teo18} & \leq (5C+19/4)\epsilon(A)e^{\rho(A)\mu(A)r}e^{-\rho(A)\mu(A)t}\\
& \leq (5C+19/4)\epsilon(A)2\mu(A)t\;e^{f_A\mu(A)}e^{-\mu(A)t(\rho(A)-C_3\epsilon(A))}\\
& \leq (66M+82)\epsilon(A)\mu(A)te^{-\mu(A)t(\rho(A)-C_3\epsilon(A))}.
\end{align*}
\end{proof}

 \subsection{Proof of Theorem \ref{theo:positive}}
\begin{proof}[Proof of Statement (a)] For each $x\in\mathcal{X}$ we define 
\[
\tau(x):=\sup\{\tau(x_0^{n-1}),n\geq 1\}.
\]
Let $\mathcal{B}=\{x\in\mathcal{X};\tau(x)=\infty\}$ be the set of aperiodic points of $\mathcal{X}$. For $x\in\mathcal{B}$, denote $A_n=A_n(x)$ and consider the case $\tau(A_n)<n$. Then, we have
\begin{align*}
1-\rho(A_n)&=\mu_{A_n}\left(T_{A_n}=\tau(A_n)\right)\\
& = \mu_{A_n}\left(\sigma^{-n}\left(A_n^{(\tau(A_n)}\right)\right)\\
& \leq \mu_{A_n}\left(\sigma^{-n-\lfloor\tau(A_n)/2\rfloor}\left(A_n^{(\lceil\tau(A_n)/2\rceil}\right)\right)\\
& \leq \mu\left(A_n^{(\lceil\tau(A_n)/2\rceil}\right)+\phi\left(\lfloor\tau(A_n)/2\rfloor+1\right).
\end{align*}
Since $x\in\mathcal{B}$, we have $\tau(A_n)\stackrel{n}\longrightarrow\infty$, which implies that the last expression converges to zero.
For the case $\tau(A_n)\geq n$, we use the same argument
\begin{align*}
1-\rho(A_n) & =\mu_{A_n}\left(\sigma^{-\tau(A_n)}(A_n)\right)\\
& \leq \mu_{A_n}\left(\sigma^{-\tau(A_n)-\lfloor n/2\rfloor}\left(A_n^{(\lceil n/2\rceil}\right)\right)\\
& \leq \mu\left(A_n^{(\lceil n/2\rceil)}\right)+\phi\left(\lfloor n/2\rfloor+1\right)
\end{align*}
which also converges to zero. Therefore, $\rho(A_n)\stackrel{n}\longrightarrow 1$. We conclude the proof by noting that $\mathcal{X}-\mathcal{B}$ is a countable set, and thus $\mu(\mathcal{B})=1$.
\end{proof}

\begin{proof}[Proof of Statement (b)] By Lemma \ref{lemtau}, for $\psi$-mixing or summable $\phi$-mixing measures, there exists $n_0\geq 1$ such that
\[
\forall n\geq n_0,\;\forall A\in\mathcal C_n,\;\;\mu(A)^{-1}> \tau(A)\,.
\]
Now, since $\mu_{A}(T_{A} > j),j\geq 1$ is  a nonincreasing sequence, the potential well is larger or equal than
the arithmetic mean of the subsequent  $\mu(A)^{-1}$ elements 
\begin{align}
\nonumber
\rho(A)=\mu_{A}(T_{A} > \tau(A) )
&\ge \frac{1}{\mu(A)^{-1} -\tau(A)}   \sum_{j=\tau(A)}^{\mu(A)^{-1}-1} \mu_{A}(T_{A} > j)\\
\nonumber
&\geq \frac{1}{\mu(A)^{-1}}   \sum_{j=\tau(A)}^{\mu(A)^{-1}-1} \mu_{A}(T_{A} > j)\\
& = \sum_{j=\tau(A)}^{\mu(A)^{-1}-1} \mu(A; T_{A} > j)\nonumber\\
&= \sum_{j=\tau(A)}^{\mu(A)^{-1}-1} \mu(T_{A} = j+1).
\label{chou}
\end{align}
In the last equality we used  Lemma \ref{Est}.
By \eqref{chou} one obtain
\begin{align}\label{chichi}
\rho(A)
&\geq  \mu(    T_{A}  \le \mu(A)^{-1}  )  - \mu(T_{A}  \le \tau(A))    \nonumber\\
&=   
\mu(T_{A}  \le \mu(A)^{-1} )    -  \tau(A)\mu(A)
\end{align}
where the equality follows by stationarity and the definition of $\tau(A)$.

By Lemmas \ref{lemsubexp} and \ref{lemtau}, we know that $\tau(A)\mu(A)\stackrel{n}\longrightarrow 0$ uniformly. Thus, it is enough  to find  a strictly positive lower bound for $\mu(T_{A}  \le \mu(A)^{-1} )$. Let
\[
N=\sum_{j=1}^{\mu(A)^{-1}} \un_{A}\circ \sigma^{j}\,
\]
which counts the number of occurrences of $A$ up to $\mu(A)^{-1}$. By the so-called \emph{second moment method}, 
\begin{equation} \label{chouchouchou}
 \mu(T_{A}  \le  \mu(A)^{-1} ) =\mu(N\ge1)\ge \frac{\E(N)^2}{\E(N^2)} .
\end{equation}
Stationarity gives $\E(N)=1$. It remains to
prove that 
$\E(N^2)$ is bounded above by a constant. Expanding $N^2$, using stationarity and $\E(N)=1$ we obtain
\begin{equation} \label{geral}
\E(N^2)
=1+ 2\sum_{j=1}^{\mu(A)^{-1}}  (\mu(A)^{-1}-j)\, \mu(A\cap\sigma^{-j} (A)) \,.
\end{equation}

Let us first consider the $\phi$-mixing case. 
For $j\ge n$, mixing gives $ \mu(A\cap\sigma^{-j} (A))\le \mu(A)^2+\mu(A)\phi(j-n+1)$. 
Thus, 
\begin{align} \label{chocho}
\sum_{j=n}^{\mu(A)^{-1}}  (\mu(A)^{-1}-j)\, \mu(A\cap\sigma^{-j} (A))\le \frac{1}{2} +\sum_{\ell=0}^{\mu(A)^{-1}-n } \phi(\ell+1) 
\end{align}
where we used $\mu(A)^{-1}-j\leq\mu(A)^{-1}$ to get the last term.

For $1\leq j\le n-1$, as before $ A^{(j)} \subset    A^{(\lceil j/2 \rceil)}  $, thus 
\begin{align*}
\mu\left(A\cap\sigma^{-j} \left( A\right)\right) 
& = \mu\left(A\cap \sigma^{-n}\left(A^{(j)}\right)\right)\\
& \leq \mu\left(A\cap\sigma^{-n-\lfloor j/2 \rfloor}\left( A^{(\lceil j/2 \rceil)}\right)\right) \\
& \le \mu( A) \left( \mu\left(A^{\left(\lceil j/2 \rceil\right)}\right)  + \phi( \lfloor j/2+1 \rfloor) \right)\\
&\le \mu(A)\left(Ce^{-c\lceil j/2 \rceil}+\phi( \lfloor j/2 +1\rfloor)\right)\,.
\end{align*}
Therefore, 
\begin{align}
\sum_{j=1}^{n-1} (\mu(A)^{-1}-j)\, \mu(A\cap\sigma^{-j} (A))\leq \sum_{j=1}^{n-1} \left( Ce^{-c\lceil j/2 \rceil}+\phi( \lfloor j/2 +1\rfloor) \right).
\label{chachou}
\end{align}
Therefore, by \eqref{chocho} and \eqref{chachou}, the summability of $\phi$ concludes the proof for the $\phi$-mixing case.

If $\mu$ is $\psi$-mixing, we separate the sum in (\ref{geral}) in three parts. First, recall the definition of $g_0$ in Section \ref{sec:correct}. For $1\leq j\leq g_0$, we bound the sum as follows
\[
\sum_{j=1}^{g_0} (\mu(A)^{-1}-j)\, \mu(A\cap\sigma^{-j} (A)) \leq \sum_{j=1}^{g_0}\mu(A)^{-1}\mu(A)=g_0.
\]
For $g_0+1\leq j \leq g_0+n-1$, we have by $\psi$-mixing
\[
(\mu(A)^{-1}-j)\, \mu(A\cap\sigma^{-j} (A))\leq \mu(A)^{-1}\mu\left(A\cap\sigma^{-n-g_0} \left(A^{(j-g_0)}\right)\right)\le M\, \mu(A)^{-1}\mu(A)\mu\left(A^{(\ell)}\right)
\]
where we denoted $\ell=j-g_0$. Thus
\[
\sum_{j=g_0+1}^{g_0+n-1}  (\mu(A)^{-1}-j)\, \mu(A\cap\sigma^{-j} (A))\le  M\sum_{\ell=1}^{n-1}Ce^{-c\ell}.
\]
Finally, applying $\psi$-mixing again, 
\[
\sum_{j=g_0+n}^{\mu(A)^{-1}}  (\mu(A)^{-1}-j)\, \mu(A\cap\sigma^{-j} (A))\le M\sum_{j=n+g_0}^{\mu(A)^{-1}}  \mu(A)^{-1}\,\mu(A)^2 \leq M,
\]
concluding the proof of the $\psi$-mixing case.


%
%

\end{proof}

\bibliography{potentialbib}

\newcommand{\etalchar}[1]{$^{#1}$}
\begin{thebibliography}{LFdF{\etalchar{+}}16}

\bibitem[AB93]{aldous1993inequalities}
David~J Aldous and Mark Brown.
\newblock Inequalities for rare events in time-reversible markov chains ii.
\newblock {\em Stochastic Processes and their Applications}, 44(1):15--25,
  1993.

\bibitem[Aba01]{abadi2001exponential}
Miguel Abadi.
\newblock Exponential approximation for hitting times in mixing processes.
\newblock {\em Math. Phys. Electron. J}, 7(2):1--19, 2001.

\bibitem[Aba04]{abadi/2004}
Miguel Abadi.
\newblock Sharp error terms and necessary conditions for exponential hitting
  times in mixing processes.
\newblock {\em Ann. Probab.}, 32(1A):243--264, 2004.

\bibitem[Aba06]{abadi2006hitting}
Miguel Abadi.
\newblock Hitting, returning and the short correlation function.
\newblock {\em Bulletin of the Brazilian Mathematical Society}, 37(4):593--609,
  2006.

\bibitem[AC15]{abadi/cardeno/2015}
Miguel Abadi and Liliam Carde{\~n}o.
\newblock R\'enyi entropies and large deviations for the first match function.
\newblock {\em IEEE Trans. Inform. Theory}, 61(4):1629--1639, 2015.

\bibitem[ACG15]{abadi/cardeno/gallo/2015}
Miguel Abadi, Liliam Carde{\~n}o, and Sandro Gallo.
\newblock Potential well spectrum and hitting time in renewal processes.
\newblock {\em J. Stat. Phys.}, 159(5):1087--1106, 2015.

\bibitem[ACG19]{abadi2019complete}
M~Abadi, J-R Chazottes, and S~Gallo.
\newblock The complete $l^q$-spectrum and large deviations for return times for
  equilibrium states with summable potentials.
\newblock {\em arXiv preprint arXiv:1902.03441}, 2019.

\bibitem[ACS03]{afraimovich/chazottes/saussol/2003}
Valentin Afraimovich, Jean~Ren{\'e} Chazottes, and Beno{\^{\i}}t Saussol.
\newblock Pointwise dimensions for {P}oincar\'e recurrences associated with
  maps and special flows.
\newblock {\em Discrete Contin. Dyn. Syst.}, 9(2):263--280, 2003.

\bibitem[AGRM17]{abadi2017shortest}
Miguel Abadi, Sandro Gallo, and Erika~Alejandra Rada-Mora.
\newblock The shortest possible return time of $\beta$-mixing processes.
\newblock {\em IEEE Transactions on Information Theory}, 64(7):4895--4906,
  2017.

\bibitem[AL13]{abadi/lambert/2013}
Miguel Abadi and Rodrigo Lambert.
\newblock The distribution of the short-return function.
\newblock {\em Nonlinearity}, 26(5):1143--1162, 2013.

\bibitem[AS11]{abadi2011hitting}
Miguel Abadi and Benoit Saussol.
\newblock Hitting and returning to rare events for all alpha-mixing processes.
\newblock {\em Stochastic processes and their applications}, 121(2):314--323,
  2011.

\bibitem[AS16]{abadi2016almost}
Miguel Abadi and Beno{\^\i}t Saussol.
\newblock Almost sure convergence of the clustering factor in $\alpha$-mixing
  processes.
\newblock {\em Stochastics and Dynamics}, 16(03):1660016, 2016.

\bibitem[AV08]{abadi/vaienti/2008}
Miguel Abadi and Sandro Vaienti.
\newblock Large deviations for short recurrence.
\newblock {\em Discrete and Continuous Dynamical Systems-Series A},
  21(3):729--747, 2008.

\bibitem[AV09]{abadi/vergne/2009}
Miguel Abadi and Nicolas Vergne.
\newblock Sharp error terms for return time statistics under mixing conditions.
\newblock {\em J. Theoret. Probab.}, 22(1):18--37, 2009.

\bibitem[Bra05]{bradleysurvey}
Richard~C. Bradley.
\newblock Basic properties of strong mixing conditions. {A} survey and some
  open questions.
\newblock {\em Probab. Surv.}, 2:107--144, 2005.
\newblock Update of, and a supplement to, the 1986 original.

\bibitem[CGS99]{collet/galves/schmitt/1999}
Pierre Collet, Antonio Galves, and Bernard Schmitt.
\newblock Repetition times for {G}ibbsian sources.
\newblock {\em Nonlinearity}, 12(4):1225--1237, 1999.

\bibitem[CU05]{chazottes/ugalde/2005}
Jean-Ren{\'e} Chazottes and Edgardo Ugalde.
\newblock Entropy estimation and fluctuations of hitting and recurrence times
  for {G}ibbsian sources.
\newblock {\em Discrete Contin. Dyn. Syst. Ser. B}, 5(3):565--586, 2005.

\bibitem[FFT10]{freitas2010hitting}
Ana Cristina~Moreira Freitas, Jorge~Milhazes Freitas, and Mike Todd.
\newblock Hitting time statistics and extreme value theory.
\newblock {\em Probability Theory and Related Fields}, 147(3-4):675--710, 2010.

\bibitem[Fre13]{freitas2013extremal}
Jorge~Milhazes Freitas.
\newblock Extremal behaviour of chaotic dynamics.
\newblock {\em Dynamical Systems}, 28(3):302--332, 2013.

\bibitem[GS97]{galves/schmitt/1997}
A.~Galves and B.~Schmitt.
\newblock Inequalities for hitting times in mixing dynamical systems.
\newblock {\em Random Comput. Dynam.}, 5(4):337--347, 1997.

\bibitem[HV10]{haydn/vaienti/2010}
Nicolai Haydn and Sandro Vaienti.
\newblock The {R}\'enyi entropy function and the large deviation of short
  return times.
\newblock {\em Ergodic Theory Dynam. Systems}, 30(1):159--179, 2010.

\bibitem[LFdF{\etalchar{+}}16]{lucarini2016extremes}
Valerio Lucarini, Davide Faranda, Jorge Miguel~Milhazes de~Freitas, Mark
  Holland, Tobias Kuna, Matthew Nicol, Mike Todd, Sandro Vaienti, et~al.
\newblock {\em Extremes and recurrence in dynamical systems}.
\newblock John Wiley \& Sons, 2016.

\bibitem[LLR12]{leadbetter2012extremes}
Malcolm~R Leadbetter, Georg Lindgren, and Holger Rootz{\'e}n.
\newblock {\em Extremes and related properties of random sequences and
  processes}.
\newblock Springer Science \& Business Media, 2012.

\bibitem[MS94]{marton1994almost}
Katalin Marton and Paul Shields.
\newblock Almost sure waiting time results for weak and very weak bernoulli
  processes.
\newblock In {\em Proceedings of 1994 IEEE International Symposium on
  Information Theory}, page 180. IEEE, 1994.

\bibitem[OW93]{ornstein/weiss/1993}
Donald~Samuel Ornstein and Benjamin Weiss.
\newblock Entropy and data compression schemes.
\newblock {\em IEEE Trans. Inform. Theory}, 39(1):78--83, 1993.

\bibitem[Res13]{resnick2013extreme}
Sidney~I Resnick.
\newblock {\em Extreme values, regular variation and point processes}.
\newblock Springer, 2013.

\bibitem[Shi93]{shields1993waiting}
Paul~C Shields.
\newblock Waiting times: positive and negative results on the wyner-ziv
  problem.
\newblock {\em Journal of Theoretical Probability}, 6(3):499--519, 1993.

\bibitem[STV02]{saussol/troubetzkoy/vaienti/2002}
B.~Saussol, S.~Troubetzkoy, and S.~Vaienti.
\newblock Recurrence, dimensions, and {L}yapunov exponents.
\newblock {\em J. Statist. Phys.}, 106(3-4):623--634, 2002.

\bibitem[WZ89]{wyner1989some}
Aaron~D Wyner and Jacob Ziv.
\newblock Some asymptotic properties of the entropy of a stationary ergodic
  data source with applications to data compression.
\newblock {\em IEEE Transactions on Information Theory}, 35(6):1250--1258,
  1989.

\end{thebibliography}
\bibliographystyle{alpha}


\end{document}